\documentclass[12pt, A4paper]{amsart}
  \usepackage{latexsym}
  \usepackage[all]{xy}
  \usepackage{amsthm}
  \usepackage{amssymb}
 \numberwithin{equation}{section}
  \usepackage{color}
  \usepackage{hyperref}

 %%%%%%% macros %%%%%%%%%%%%%%%%%%%%%%%%%%%%%%%%%%%%%%%%

\def\SM{\mathsf{SetMat}}
\def\Cat{\mathsf{Cat}}
\def\cE{\mathcal E}
\def\cM{\mathcal M}
\def\cC{\mathcal C}
\def\cO{\mathcal O}
\def\cD{\mathcal D}
\def\ad2{(-)^2}
\def\x{\times}
\def\lt{\triangleleft}
\def\rt{\triangleright}
\def\emor{\twoheadrightarrow}
\def\mmor{\rightarrowtail}

  \headheight0.6in
  \headsep22pt
  \textheight23cm
  \topmargin-1.7cm
  \oddsidemargin 0.5cm
  \evensidemargin0.5cm
  \textwidth15.3cm

  \newtheorem{proposition}{Proposition}[section]
  \newtheorem*{proposition*}{Proposition}
  \newtheorem*{lemma*}{Lemma}
  \newtheorem{lemma}[proposition]{Lemma}
  \newtheorem{corollary}[proposition]{Corollary}
  \newtheorem{theorem}[proposition]{Theorem}

  \theoremstyle{definition}
  
  \newtheorem{example}[proposition]{Example}

  \newtheorem{aussage}[proposition]{}

  \newcounter{c}
  
  \newcommand{\etyk}[1]{\vspace{-7.4mm}$$\begin{equation}\Label{#1}
  \addtocounter{c}{1}}
  \renewcommand{\]}{\ifnum \value{c}=1 $$\else \end{equation}\fi}
  \setcounter{tocdepth}{2}

  \begin{document}

\title{Factorization systems induced by weak distributive laws}

 \author{Gabriella B\"ohm}
 \address{Research Institute for Particle and Nuclear Physics, Budapest,
 \newline\indent H-1525
 Budapest 114, P.O.B.\ 49, Hungary}
  \email{G.Bohm@rmki.kfki.hu}
\date{Sep 2010}
\subjclass{}

\begin{abstract}
We relate weak distributive laws in $\SM$ to strictly  associative (but not
strictly unital) pseudoalgebras of the 2-monad $\ad2$ on $\Cat$. The
corresponding orthogonal factorization systems are characterized by a certain
bilinearity property.  
\end{abstract}
  \maketitle

%%%%%%%%%%%%%%%%%%%%%%%%%%%   INTRODUCTION %%%%%%%%%%%%%%%%%%%%%%%%%%%%%%%%%%

\section*{Introduction}

The study of (orthogonal) factorization systems has a long history in category 
theory, see e.g. \cite{FreKel}. The interest in the subject was renewed by
its appearance in Quillen's model categories.

An {\em orthogonal factorization system} (see e.g. \cite{FreKel}) on a small
category $\cC$ is given by subcategories $\cE\hookrightarrow \cC\hookleftarrow
\cM$, such that both $\cE$ and $\cM$ contain all isomorphisms of $\cC$; any
morphism $f$ in $\cC$ admits a (not necessarily unique) decomposition $f=me$
in terms of morphisms $e$ in $\cE$ and $m$ in $\cM$; and the so-called
`diagonal fill-in' condition holds, i.e. for any morphisms $e$ in $\cE$ and
$m$ in $\cM$, every commutative diagram  
$$
\xymatrix{x\ar[r]^-u\ar[d]_-e&
y\ar[d]^-{m}\\
a\ar[r]^-{v}\ar@{-->}[ru]^-t&
b}
$$
in $\cC$ admits a uniquely determined fill-in morphism $t$ in $\cC$ such that
both resulting triangles commute. As it was observed by Korostenski and Tholen 
in \cite{KorTho}, orthogonal factorization systems can be described as
pseudoalgebras of the 2-monad $\ad2$ on the 2-category $\Cat$ of categories;
functors; and natural transformations. 

A {\em strict factorization system} (see \cite{Gra}) on $\cC$ means
subcategories $\cE\hookrightarrow \cC\hookleftarrow \cM$, such that both $\cE$
and $\cM$ contain all identity morphisms; and any morphism $f$ in $\cC$ admits
a {\em unique} decomposition $f=me$ in terms of morphisms  $e$ in $\cE$ and
$m$ in $\cM$. As it was pointed out by Grandis in \cite{Gra}, strict
factorization systems can be regarded as special instances of orthogonal ones
-- in the sense that any strict factorization system is contained in precisely
one orthogonal factorization system. It was proved by Rosebrugh and Wood in
\cite{RosWoo} that orthogonal factorization systems arising from strict ones
are precisely those corresponding to strict algebras for the 2-monad $\ad2$ on
$\Cat$ via the correspondence in \cite{KorTho}. Furthermore, in \cite{RosWoo}
also an equivalence between strict factorization systems and distributive laws
(in the sense of \cite{Beck} or rather of \cite{street}) in the bicategory
$\SM$ was established.     

Our aim in this paper is to study factorization systems induced by {\em weak
distributive laws} (in the sense introduced by Ross Street in
\cite{Str:weak_dl}) in $\SM$.  While a distributive law in $\SM$ determines a
strict factorization system; i.e. an inverse of the 2-cell 
$$
\xymatrix{
\cE\cM\ar@{^{(}->}[r]&
\cC\cC\ar[r]^-{\circ}& 
\cC}
$$
in $\SM$, a weak distributive law corresponds to an appropriate section of
this 2-cell, obeying `$\cE$-, and $\cM$-linearity' properties (explained in
detail in Section \ref{sec:demifact}).  
We call such a section a {\em bilinear factorization system}.  
We prove that there are adjunctions whose counits are isomorphisms, 
between the following categories. 
First, the category of weak distributive laws in $\SM$, to be defined in
Section \ref{sec:wdl}; 
and the category of bilinear factorization systems on small categories,
introduced in Section \ref{sec:demifact}.
Second, this category of bilinear factorization systems; and the category of
strictly associative pseudoalgebras of the 2-monad $\ad2$ on $\Cat$.
Since by results in \cite{KorTho},  pseudoalgebras of $\ad2$ are equivalent to
orthogonal factorization systems, we conclude that any bilinear factorization
system is contained in a canonically associated orthogonal factorization
system. In particular, weak distributive laws in $\SM$ induce orthogonal
factorization systems. 
\bigskip

\noindent
{\bf Acknowledgement.}
This work was supported by the Hungarian Scientific Research Fund OTKA
F67910. 
My interest in the subject of this paper was triggered by Emily Riehl's talk
at The Third Morgan-Phoa Mathematics Workshop in Canberra, January of 2010. I
am grateful to Emily for an inspiring talk and the organizers for a generous
invitation. 

%%%%%%%%%%%%%%%%%%%%%%%%%%%%%%%%%%%  SEC 1 %%%%%%%%%%%%%%%%%%%%%%%%%%%%%%%%%%

\section{Weak distributive laws in $\SM$}
\label{sec:wdl}

\begin{aussage}{\bf Weak distributive laws.} \label{as:wdl}
Extending the notion of distributive law due to Jon Beck (see \cite{Beck}),
weak distributive laws in a bicategory  were introduced by Ross Street in
\cite{Str:weak_dl} as follows.  
They consist of two monads $(t,\mu,\eta)$, $(s, \nu,\vartheta)$ on the same
object, and a 2-cell $\varphi:ts\to st$ such that the following diagrams
commute (where the coherence 2-cells are omitted, as they are throughout the
paper).   
$$
\xymatrix@C=10pt{
tts\ar[r]^-{t\varphi}\ar[d]^-{\mu s}&
tst\ar[r]^-{\varphi t}&
stt\ar[d]_-{s\mu}\\
ts\ar[rr]_--\varphi&&
st
}\ 
\xymatrix@C=10pt{
s\ar[rrr]^-{\eta s}\ar[d]^-{s \eta}&&&
ts\ar[d]_-\varphi\\
st\ar[r]_-{st \vartheta}&
sts\ar[r]_-{s\varphi}&
sst\ar[r]_-{\nu t}&
st
}\ 
\xymatrix@C=10pt{
tss\ar[r]^-{\varphi s}\ar[d]^-{t\nu}&
sts\ar[r]^-{s\varphi}&
sst\ar[d]_-{\nu t}\\
ts\ar[rr]_--\varphi&&
st
}\ 
\xymatrix@C=10pt{
t\ar[rrr]^-{t\vartheta}\ar[d]^-{\vartheta t}&&&
ts\ar[d]_-\varphi\\
st\ar[r]_-{\eta st}&
tst\ar[r]_-{\varphi t}&
stt\ar[r]_-{s\mu}&
st
}
$$
By \cite[Proposition 2.2]{Str:weak_dl}, the second and fourth diagrams can
be replaced by a single diagram
$$
\xymatrix{
st\ar[r]^-{\eta st}\ar[d]_-{st\vartheta}&
tst\ar[r]^-{\varphi t}&
stt\ar[d]^-{s\mu}\\
sts\ar[r]_-{s\varphi}&
sst\ar[r]_-{\nu t}&
st\ .
}
$$
The equal paths in the latter diagram give rise to an idempotent 2-cell which
is an identity if and only if $\varphi$ is a distributive law in the strict
sense. 
\end{aussage}

\begin{aussage}{\bf Demimonads.} \label{as:demimonad}
In simplest terms, a demimonad in a bicategory is a monad in the local Cauchy
completion, cf. \cite{Lac_talk}. Explicitly, it is given by a 1-cell $t:A\to
A$ and 2-cells $\mu:t^2 \to t$ and $\eta:1_A\to t$ such that the following
diagrams commute. 
$$
\xymatrix{
t^3\ar[r]^-{\mu t}\ar[d]_-{t\mu}&
t^2\ar[d]^-\mu\\
t^2\ar[r]_-\mu&
t
}
\quad 
\xymatrix{
t\ar[r]^-{\eta t}\ar[d]_-{t\eta}&
t^2\ar[d]^-\mu\\
t^2\ar[r]_-\mu&
t
}\quad
\xymatrix{
1_A \ar[r]^-{\eta \eta}\ar[dr]_-{\eta}&
t^2\ar[d]^-\mu\\
&t
}\quad
\xymatrix{
t^2\ar[r]^-{\eta t^2}\ar[rd]_-{\mu}&
t^3\ar[d]^-{\mu^2}\\
&t
} 
$$
This structure occurred in \cite{Boh_wtm} under the name `pre-monad'. The
2-cell $\mu.\eta t: t\to t$ is idempotent and whenever it splits, the
corresponding retract is a proper monad.

By a special case of \cite[Theorem 2.3]{Boh_wtm}, for any weak distributive
law $\varphi:ts\to st$, the composite 1-cell $st$ is a demimonad via
$\varphi.\eta\theta:1_A\to st$ and $\nu\mu.s\varphi t:stst\to st$. 
\end{aussage}

\begin{aussage}{\bf The bicategory $\SM$.} \label{as:SetMat}
Recall (e.g. from Section 2 of \cite{DayStr}) that
0-cells in $\SM$ are sets. 1-cells $f:A\to B$ are set valued matrices; that
is, collections $\{f(a,b)\}_{a,b}$ of sets labelled by elements $a$ of $A$ and
$b$ of $B$. 2-cells $\alpha:f\to g$ are matrices $\{\alpha(a,b)\}_{a\in A,b\in
B}$ of functions $\alpha(a,b):f(a,b)\to g(a,b)$. Horizontal composition is
given by matrix multiplication; that is, $hf(a,c)=\sum_{b\in B} h(b,c)\x
f(a,b)$, for 1-cells $f:A\to B$ and $h:B\to C$. Horizontal composition of
2-cells is given by the obvious inducement. Vertical composition is given by
the elementwise composition of functions.  
  
Functions $q:A \to B$ can be regarded as 1-cells $f_q:A\to B$ by taking
$f_q(a,b)$ to be a one element set if $b=q(a)$ and the empty set otherwise.  

Small categories are the same as monads in $\SM$. Functors $\cM\to
\cM'$ between them can be described as appropriate monad morphisms (in the
sense of \cite{street}) -- whose 1-cell part $f_q$ is induced by the
object map $q$ and whose 2-cell part $f_q\cM\to \cM'f_q$ is given by the
morphism map $\cM(a,a')\to \cM'(q(a),q(a'))$. 
\end{aussage}

\begin{aussage}{\bf Weak distributive laws in $\SM$.}
\label{as:wdl_SM}
Applying the definition of a weak distributive law in \ref{as:wdl} to the
particular bicategory $\SM$, we arrive at the following structure. We need two
categories $\cE$ and $\cM$ with coinciding object sets $\cO$, and for all
elements $a,b,c$ of $\cO$ a function
$$
\cE(b,c)\x \cM(a,b) \to \sum_{x\in \cO} \cM(x,c)\x \cE(a,x),\qquad 
(e,m)\mapsto (e\rt m, e\lt m),
$$
where $e\lt m$ denotes a morphism $a\to e.m$ in $\cE$, $e\rt m$ is a morphism
$e.m\to c$ in $\cM$ and $e.m$ is an element of $\cO$. The four conditions in
\ref{as:wdl} translate to the following requirements, for all morphisms
$e:c\to b'$ and $f:b'\to a'$ in $\cE$ and $n:a\to b$ and $m:b\to c$ in $\cM$.
\begin{eqnarray}
1_c.m=1_b.1_b\qquad &&e.1_c=1_{b'}.1_{b'} \label{eq:dot_1}\\
(fe).m=f.(e\rt m)\qquad &&e.(mn)=(e\lt m).n \label{eq:dot_mp}\\
1_c\lt m = 1_b \lt 1_b\qquad&& e\lt 1_c=(1_{b'} \lt 1_{b'})e \label{eq:lt_1}\\
(fe)\lt m=[f\lt(e\rt m)](e\lt m) \qquad && e\lt(mn)=(e\lt m)\lt n 
\label{eq:lt_mp}\\
1_c\rt m = m(1_b\rt 1_b)\qquad &&e\rt 1_c = 1_{b'} \rt 1_{b'} \label{eq:rt_1}\\
(fe)\rt m =f\rt(e\rt m)\qquad &&e\rt (mn)=(e\rt m)[(e\lt m)\rt n].
\label{eq:rt_mp}
\end{eqnarray}
\end{aussage}

\begin{aussage} {\bf The category induced by a weak distributive law in
$\SM$.}  \label{as:prod_cat}
As we recalled in \ref{as:demimonad}, for a weak distributive law
$\varrho:\cE\cM\to \cM\cE$ in $\SM$, $\cM\cE$ is a demimonad in $\SM$ via the
multiplication $(n,f)(m,e)=(n(f\rt m),(f\lt m)e)$ and $\eta(a)=(1_a\rt
1_a,1_a\lt 1_a)$. The corresponding idempotent 2-cell $\cM\cE\to \cM\cE$ in
$\SM$ takes the explicit form  
\begin{equation}\label{eq:ErhoM_morphisms}
(m,e)\mapsto (1_c\rt m, e\lt 1_a)=(m(1_b\rt 1_b),(1_b\lt 1_b)e),
\end{equation}
for any morphisms $e:a\to b$ in $\cE$ and $m:b\to c$ in $\cM$. Since this is 
clearly split, the corresponding retract is a monad in $\SM$, that is, a small
category. It will be denoted by $\cM_\varrho\cE$. Its objects are the same as
the objects of $\cE$ and of $\cM$, and its morphisms have the form as on the
right hand side of \eqref{eq:ErhoM_morphisms}. By the axioms of a demimonad in
\ref{as:demimonad}, the identity morphism of any object $a$ is equal to
$\eta(a)=(1_a\rt 1_a,1_a\lt 1_a)$ and the composition law is
\begin{equation}\label{eq:comp_ErhoM}
(1_c\rt n,f\lt 1_b)(1_b\rt m,e\lt 1_a)=(n(f\rt m),(f\lt m)e),
\end{equation} 
for morphisms $e:a\to x$ and $f:b\to y$ in $\cE$ and $m:x\to b$ and $n:y\to c$
in $\cM$. (Note that $1_c \rt (n(f\rt m))=n(f\rt m)$ and $((f\lt m)e)\lt
1_a=(f\lt m)e$.) 

Since we are dealing simultaneously with three categories $\cE$, $\cM$ and
$\cM_\varrho\cE$, the following arrow notation (which we learnt from 
\cite{RosWoo}) will turn out to be handy. For morphisms in $\cE$, we use
arrows of the type $\emor$, for morphisms in $\cM$ we draw $\mmor$. Arrows in
$\cM_\varrho \cE$ can be drawn as
$$
\xymatrix{
a\ar@{->>}[rr]^-{e\lt 1_a} &&
1_b.1_b\   \ar@{>->}[rr]^-{1_c\rt m}&&
c.
}
$$
\end{aussage}

\begin{proposition}\label{prop:functor_i}
For a weak distributive law $\varrho:\cE\cM\to \cM\cE$ in $\SM$, the following 
functions define identity-on-objects functors $i:\cE\to \cM_\varrho\cE$ and
$i:\cM\to \cM_\varrho\cE$, respectively.
\begin{eqnarray*}
&(\xymatrix{a\ar@{->>}[r]^-e& b}) \mapsto
&(\xymatrix{a\ar@{->>}[r]^-{e\lt 1_a}&1_b.1_b\  \ar@{>->}[r]^-{1_b\rt 1_b}& b})
\quad \textrm{and}\\
&(\xymatrix{a\  \ar@{>->}[r]^-m&b}) \mapsto
&(\xymatrix{a\ar@{->>}[r]^-{1_a\lt 1_a}&1_a.1_a\  \ar@{>->}[r]^-{1_b\rt m}& b}).
\end{eqnarray*}
\end{proposition}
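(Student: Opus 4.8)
The plan is to check, for each of the two assignments, the two defining axioms of a functor—preservation of identities and of composition—since the remaining claims are immediate. Both maps are identity-on-objects by construction, and the assigned morphisms visibly lie in $\cM_\varrho\cE$: the value at $e:a\to b$ in $\cE$ is exactly the splitting idempotent \eqref{eq:ErhoM_morphisms} applied to the pair $(1_b,e)$, and the value at $m:a\to b$ in $\cM$ is that same idempotent applied to $(m,1_a)$. Hence each image is fixed by the idempotent and so is a genuine morphism of $\cM_\varrho\cE$. Throughout I would compute with the explicit composition law \eqref{eq:comp_ErhoM} and the unit $\eta(a)=(1_a\rt 1_a,1_a\lt 1_a)$ recalled in \ref{as:prod_cat}.

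Preservation of identities is a single observation common to both functors: evaluating either assignment at $1_a$ returns $(1_a\rt 1_a,1_a\lt 1_a)$, which is precisely $\eta(a)$, the identity of $a$ in $\cM_\varrho\cE$. Nothing beyond the definitions is used.

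For $i:\cE\to\cM_\varrho\cE$ and composable $e:a\to b$, $f:b\to c$ in $\cE$, I would expand $i(f)i(e)$ via \eqref{eq:comp_ErhoM}, the two underlying $\cM$-parts being identities, to obtain the pair $\big(1_c(f\rt 1_b),(f\lt 1_b)e\big)$. The $\cM$-component collapses to $1_c\rt 1_c$ using the second equation of \eqref{eq:rt_1} (giving $f\rt 1_b=1_c\rt 1_c$), and the $\cE$-component rewrites as $(1_c\lt 1_c)fe$ using the second equation of \eqref{eq:lt_1} (giving both $f\lt 1_b=(1_c\lt 1_c)f$ and $(fe)\lt 1_a=(1_c\lt 1_c)fe$); the result is exactly $i(fe)$. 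Symmetrically, for $i:\cM\to\cM_\varrho\cE$ and composable $m:a\to b$, $n:b\to c$ in $\cM$, expanding $i(n)i(m)$ with the two underlying $\cE$-parts being identities yields $\big(n(1_b\rt m),(1_b\lt m)1_a\big)$; the first equations of \eqref{eq:rt_1} and \eqref{eq:lt_1} (namely $1_b\rt m=m(1_a\rt 1_a)$ and $1_b\lt m=1_a\lt 1_a$) identify this with $\big((nm)(1_a\rt 1_a),1_a\lt 1_a\big)=i(nm)$, where \eqref{eq:dot_1} is invoked to see that the intermediate objects $e.m$ match up.

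The computations are entirely elementary, and I expect no genuine obstacle beyond bookkeeping. The one point that demands care is tracking sources, targets, and the intermediate objects $e.m$ so as to apply the correct side of each identity axiom; once the types are pinned down, every rewriting step is forced. It is worth recording that only the unital axioms \eqref{eq:dot_1}, \eqref{eq:lt_1}, \eqref{eq:rt_1} intervene, whereas the multiplicativity axioms \eqref{eq:dot_mp}, \eqref{eq:lt_mp}, \eqref{eq:rt_mp} are never needed—the expected analogue, in the weak setting, of the classical fact that the corresponding insertions for a strict distributive law are unit-preserving.
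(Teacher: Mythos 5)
Your proof is correct and takes essentially the same route as the paper's: you identify $i(e)$ and $i(m)$ as the images of $(1_b,e)$ and $(m,1_a)$ under the splitting idempotent, reduce preservation of composition to the computation of $(1_c,f)(1_b,e)$ (resp. $(1_c,n)$-style products), and finish with the unit axioms \eqref{eq:lt_1}, \eqref{eq:rt_1} and \eqref{eq:dot_1} — exactly the paper's steps. The only difference is presentational: where you invoke the composition law \eqref{eq:comp_ErhoM} from \ref{as:prod_cat}, the paper invokes the demimonad identity $\mu.\mu\mu.t\eta t\eta=\mu$, but \eqref{eq:comp_ErhoM} is precisely that identity in evaluated form, so the content is identical.
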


\begin{proof}
Preservation of identity morphisms is obvious. In order to verify preservation
of compositions, note that $i(e)$ is the image of $(1_b,e)$, and $i(m)$ is the
image of $(m,1_a)$, under the idempotent 2-cell $\mu.t\eta$ in the demimonad
$t=\cM\cE$. Hence preservation of compositions follows from the demimonad
identity $\mu. \mu\mu.t \eta t \eta = \mu$.
When the functor $\cE\to \cM_\varrho\cE$ is considered, evaluate this
demimonad identity at any object of the form $(1_c,f,1_b,e)$ of
$\cM\cE\cM\cE(a,c)$; and use that by the second equalities in \eqref{eq:lt_1}
and \eqref{eq:rt_1}, $(1_c,f)(1_b,e)=i(fe)$. One proceeds symmetrically in
case of the functor $\cM\to \cM_\varrho\cE$.  
\end{proof}

\begin{lemma}\label{lem:rho_commutes}
For a weak distributive law $\varrho:\cE\cM\to \cM\cE$ in $\SM$, and for
compatible morphisms $e$ in $\cE$ and $m$ in $\cM$, the following diagram in
$\cM_\varrho\cE$ is commutative,
$$
\xymatrix{
a\ar[r]^{i(m)}\ar[d]_-{i(e\lt m)}&b\ar[d]^-{i(e)}\\
e.m\ar[r]_-{i(e\rt m)}&c
}
$$
where $i$ denotes both functors in Proposition \ref{prop:functor_i}.
\end{lemma}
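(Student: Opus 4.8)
The plan is to check commutativity of the square by evaluating both composites in $\cM_\varrho\cE$ and recognising that each equals one and the same morphism $a\to c$, namely the one represented by the pair $(e\rt m, e\lt m)$ with intermediate object $e.m$. Throughout I write $e:b\to c$ in $\cE$ and $m:a\to b$ in $\cM$ for the given compatible pair. First I would record, using Proposition \ref{prop:functor_i}, the four edges of the square as morphisms of $\cM_\varrho\cE$, together with the ``raw'' representatives in $\cM\cE$ of which they are the images under the splitting idempotent \eqref{eq:ErhoM_morphisms}: thus $i(m)=(1_b\rt m,1_a\lt 1_a)$ comes from $(m,1_a)$; $i(e)=(1_c\rt 1_c,e\lt 1_b)$ comes from $(1_c,e)$; $i(e\lt m)=(1_{e.m}\rt 1_{e.m},(e\lt m)\lt 1_a)$ comes from $(1_{e.m},e\lt m)$; and $i(e\rt m)=(1_c\rt(e\rt m),1_{e.m}\lt 1_{e.m})$ comes from $(e\rt m,1_{e.m})$. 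Since the composition law \eqref{eq:comp_ErhoM} is phrased directly in terms of such raw data, both composites can be read off by substitution.

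Second I would carry out the two substitutions into \eqref{eq:comp_ErhoM}. For the top-then-right path, feeding the raw data of $i(e)$ and of $i(m)$ gives $i(e)\circ i(m)=(1_c(e\rt m),(e\lt m)1_a)$, which is just $(e\rt m,e\lt m)$ after cancelling the identities $1_c$ and $1_a$ by the unit laws of $\cM$ and of $\cE$. For the left-then-bottom path, the raw data of $i(e\rt m)$ and of $i(e\lt m)$ give $i(e\rt m)\circ i(e\lt m)=\big((e\rt m)(1_{e.m}\rt 1_{e.m}),(1_{e.m}\lt 1_{e.m})(e\lt m)\big)$. Applying the first equality in \eqref{eq:rt_1} to the factor $(e\rt m)(1_{e.m}\rt 1_{e.m})$ and the second equality in \eqref{eq:lt_1} to $(1_{e.m}\lt 1_{e.m})(e\lt m)$ rewrites this as $\big(1_c\rt(e\rt m),(e\lt m)\lt 1_a\big)$, i.e. exactly the image of the pair $(e\rt m,e\lt m)$ under the idempotent \eqref{eq:ErhoM_morphisms}.

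At this stage the two sides read $(e\rt m,e\lt m)$ and its idempotent-image, so the only remaining point---which I expect to be the genuine (if short) obstacle, being the one place where the mixed-unit axioms intervene rather than mere unitality---is that $(e\rt m,e\lt m)$ is already a fixed point of \eqref{eq:ErhoM_morphisms}, that is, $1_c\rt(e\rt m)=e\rt m$ and $(e\lt m)\lt 1_a=e\lt m$. These are precisely the two identities recorded in the parenthetical remark of \ref{as:prod_cat}, so they may simply be quoted; alternatively I would derive them on the spot by specialising the multiplicativity axioms. Putting $f=1_c$ in the first equation of \eqref{eq:lt_mp} and then using the first equality of \eqref{eq:lt_1} yields $e\lt m=(1_{e.m}\lt 1_{e.m})(e\lt m)$, which is $(e\lt m)\lt 1_a$ by \eqref{eq:lt_1} once more; dually, putting $n=1_a$ in the second equation of \eqref{eq:rt_mp} and using the second equality of \eqref{eq:rt_1} yields $e\rt m=(e\rt m)(1_{e.m}\rt 1_{e.m})=1_c\rt(e\rt m)$. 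With these fixed-point identities in hand, both composites collapse to $(e\rt m,e\lt m)$, and the square commutes.
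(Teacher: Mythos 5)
Your proof is correct and takes essentially the paper's route: the composition law \eqref{eq:comp_ErhoM} into which you substitute the raw representatives is precisely the demimonad identity $\mu.\mu\mu.t\eta t\eta=\mu$ that the paper evaluates on the elements $(1_c,e,m,1_a)$ and (via \eqref{eq:inif}) $(e\rt m,1_{e.m},1_{e.m},e\lt m)$ of $\cM\cE\cM\cE(a,c)$, so both arguments identify the two paths with the pair $(e\rt m,e\lt m)$. Your explicit derivation of the fixed-point identities $1_c\rt(e\rt m)=e\rt m$ and $(e\lt m)\lt 1_a=e\lt m$ from \eqref{eq:lt_mp}, \eqref{eq:lt_1}, \eqref{eq:rt_mp} and \eqref{eq:rt_1} is a welcome piece of extra care, spelling out a step the paper leaves implicit when it identifies $(1_c\rt(e\rt m),(e\lt m)\lt 1_a)$ with the morphism $(e\rt m,e\lt m)$ of $\cM_\varrho\cE$.
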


\begin{proof}
Apply the demimonad identity $\mu. \mu\mu.t \eta t \eta = \mu$ to the
demimonad $t=\cM\cE$ and evaluate it on any element of the form
$(n,1_b,1_b,f)$ of $\cM\cE\cM\cE(a,c)$. It yields
\begin{equation}\label{eq:inif}
i(n)i(f)=\big(\xymatrix{
a\ar@{->>}[r]^-{f\lt 1_a}&1_b.1_b\, \ar@{>->}[r]^-{1_c\rt n}&c
}\big).
\end{equation}
Hence the down-then-right path is equal to the morphism $(e\rt m, e\lt m)$ in
$\cM_\varrho\cE$. Evaluating the same demimonad identity now on the element 
$(1_c,e,m,1_a)$ of $\cM\cE\cM\cE(a,c)$, we conclude that also the
right-then-down path is equal to the morphism $(e\rt m,e\lt m)$ in
$\cM_\varrho\cE$. 
\end{proof}

\begin{aussage}{\bf Injective weak distributive laws in $\SM$.}
We say that a weak distributive law $\varrho:\cE\cM\to \cM\cE$ in $\SM$ is
{\em injective} provided that both functors $\cE\to \cM_\varrho\cE$ and
$\cM\to \cM_\varrho\cE$ in Proposition \ref{prop:functor_i} are inclusions,
i.e. they act as injective functions on the morphisms. Equivalently, a weak
distributive law $\varrho:\cE\cM\to \cM\cE$ in $\SM$ is injective if an only
if $1_a\lt 1_a$ is a monomorphism in $\cE$ and $1_a\rt 1_a$ is an epimorphism
in $\cM$, for all objects $a$.
\end{aussage}

\begin{aussage}{\bf Morphisms between weak distributive laws in $\SM$.}
A morphism $(\varrho:\cE\cM\to \cM\cE)\to (\varrho':\cE'\cM'\to \cM'\cE')$ of
weak distributive laws in $\SM$ is by definition a pair of functors
$Q_\cE:\cE\to \cE'$ and $Q_\cM:\cM\to \cM'$ with a common object map $q$, such
that the following diagram (of 2-cells in $\SM$) commutes,
$$
\xymatrix{
f_q\cE\cM\ar[d]_-{f_q\varrho} \ar[r]^-{Q_\cE \cM}&
\cE'f_q\cM \ar[r]^-{\cE' Q_\cM}&
\cE'\cM'f_q\ar[d]^-{\varrho' f_q}\\
f_q \cM\cE  \ar[r]^-{Q_\cM \cE}&
\cM'f_q\cE \ar[r]^-{\cM' Q_\cE}&
\cM'\cE'f_q
}
$$
where $f_q$ is the 1-cell induced by $q$, cf. \ref{as:SetMat}.
In other words, commutativity of this diagram means the following equalities,
for all morphisms $m:a\mmor b$ in $\cM$ and $e:b\emor c$ in $\cE$.
$$
q(e.m)=Q_\cE(e).Q_\cM(m),\ \ 
Q_\cE(e\lt m)=Q_\cE(e)\lt Q_\cM(m),\ \ 
Q_\cM(e\rt m)=Q_\cE(e)\rt Q_\cM(m).
$$
Weak distributive laws in $\SM$ and their morphisms constitute a category.
\end{aussage}

\begin{proposition} \label{prop:retract}
There is an adjunction whose counit is an isomorphism, between the category of
all weak distributive laws, and the category of injective weak distributive
laws in $\SM$. 
\end{proposition}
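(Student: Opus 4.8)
The plan is to exhibit the injective weak distributive laws as a reflective subcategory of the category of all weak distributive laws in $\SM$; the asserted adjunction is then the reflection left adjoint to the inclusion, and its counit is automatically invertible by the general theory of reflective subcategories. Concretely, for each weak distributive law $\varrho:\cE\cM\to\cM\cE$ I would construct an injective weak distributive law $\overline\varrho$ together with a morphism $\varrho\to\overline\varrho$ serving as the unit, by quotienting $\cE$ and $\cM$ by exactly the congruences that the two functors $i$ of Proposition~\ref{prop:functor_i} fail to detect. The point is that, by the characterization of injectivity, the failure of $\varrho$ to be injective is measured precisely by the failure of $1_a\lt 1_a$ to be monic in $\cE$ and of $1_a\rt 1_a$ to be epic in $\cM$, and these are exactly the identifications created by $i$.

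For parallel morphisms $e,e':a\emor b$ in $\cE$ I declare $e\sim_\cE e'$ when $i(e)=i(e')$ in $\cM_\varrho\cE$, which by the formula for $i$ in Proposition~\ref{prop:functor_i} together with \eqref{eq:lt_1} is the same as $(1_b\lt 1_b)e=(1_b\lt 1_b)e'$; dually, for $m,m':a\mmor b$ in $\cM$ I set $m\sim_\cM m'$ when $i(m)=i(m')$, i.e. $m(1_a\rt 1_a)=m'(1_a\rt 1_a)$ by \eqref{eq:rt_1}. Being the kernel congruences of the functors $i$, both $\sim_\cE$ and $\sim_\cM$ are compatible with composition, so the identity-on-objects quotient categories $\overline\cE:=\cE/\!\sim_\cE$ and $\overline\cM:=\cM/\!\sim_\cM$ exist, with quotient functors $Q_\cE$ and $Q_\cM$ sharing the object map $\mathrm{id}_\cO$.

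Next I would transport the weak distributive law to the quotients by setting $[e]\cdot[m]:=e.m$, $[e]\lt[m]:=[e\lt m]$ and $[e]\rt[m]:=[e\rt m]$, and then verify the six families of axioms \eqref{eq:dot_1}--\eqref{eq:rt_mp} for $\overline\varrho$. Well-definedness is forced by the fact that the operations $\lt,\rt,\cdot$ are compatible with the functors $i$ through Lemma~\ref{lem:rho_commutes}, so that $e.m\in\cO$ and the classes $[e\lt m]$, $[e\rt m]$ depend only on $[e]$ and $[m]$. The crux is injectivity of $\overline\varrho$: one shows that the induced category $\cM_{\overline\varrho}\overline\cE$ is canonically isomorphic to $\cM_\varrho\cE$, identifying $\overline\cE$ and $\overline\cM$ with the images of the two functors $i$, so that the functors $\overline\cE\to\cM_{\overline\varrho}\overline\cE$ and $\overline\cM\to\cM_{\overline\varrho}\overline\cE$ become honest inclusions; equivalently, one checks directly that $1_a\lt 1_a$ is monic in $\overline\cE$ and $1_a\rt 1_a$ is epic in $\overline\cM$. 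This is the step I expect to be the main obstacle: the axiom checks for $\overline\varrho$ are routine but delicate, since $\lt$ and $\rt$ are only partially functorial, and the genuinely substantive claim is that passing to the kernels of $i$ creates no new identifications, so that $\cM_{\overline\varrho}\overline\cE$ faithfully reproduces $\cM_\varrho\cE$.

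Finally I would establish the adjunction. The pair $Q=(Q_\cE,Q_\cM)$ is a morphism $\varrho\to\overline\varrho$ of weak distributive laws, by the definitions of $\sim_\cE,\sim_\cM$ and of $\overline\varrho$. Given an injective weak distributive law $\sigma=(\varrho':\cE'\cM'\to\cM'\cE')$ and a morphism $g=(Q'_\cE,Q'_\cM):\varrho\to\sigma$ with object map $q$, its components must identify everything that $\sim_\cE,\sim_\cM$ identify: if $(1_b\lt 1_b)e=(1_b\lt 1_b)e'$, then applying $Q'_\cE$ and using that $g$ preserves $\lt$ yields $(1_{q(b)}\lt 1_{q(b)})Q'_\cE(e)=(1_{q(b)}\lt 1_{q(b)})Q'_\cE(e')$, whence $Q'_\cE(e)=Q'_\cE(e')$ since $1_{q(b)}\lt 1_{q(b)}$ is monic in $\cE'$ by injectivity of $\sigma$; the $\cM$-component is dual. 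Hence $g$ factors uniquely through $Q$, which is the universal property making $\overline{(-)}$ a left adjoint to the inclusion. Since for an already injective $\sigma$ both congruences are trivial, one has $\overline\sigma=\sigma$, so the counit is an isomorphism, as claimed.
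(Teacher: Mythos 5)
Your proposal is correct, and its reflection is the same object as the paper's: your quotients $\cE/\!\sim_\cE$ and $\cM/\!\sim_\cM$ by the kernel congruences of the functors $i$ of Proposition~\ref{prop:functor_i} are canonically isomorphic to the images $i(\cE)$ and $i(\cM)$ inside $\cM_\varrho\cE$ on which the paper installs its law $\widetilde\varrho$, and in both cases well-definedness of the transported operations rests on Lemma~\ref{lem:rho_commutes}. The genuine difference lies in how the adjunction is obtained. The paper makes the left adjoint $S$ into an explicit functor (inducing $\widetilde Q_\cE,\widetilde Q_\cM$ from a morphism $(Q_\cE,Q_\cM)$ and verifying, via a diagram chase using that the 2-cell $f_q ii$ is epi, that these again form a morphism of weak distributive laws), then writes down the unit $(i,i)$ and the counit and checks the triangle identities by hand. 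You instead verify the universal property of a reflection: any morphism from $\varrho$ into an injective law automatically kills your congruences, since such a morphism preserves $\lt$ and $\rt$ and $1_{q(b)}\lt 1_{q(b)}$ is monic (dually $1_{q(a)}\rt 1_{q(a)}$ is epic) in the target, and the factorization is unique because the quotient functors are full; functoriality of the reflection, naturality, and the triangle identities then come for free. Notably, this uses the mono/epi characterization of injectivity, which plays no role in the paper's proof of this proposition. What you pay for this economy is a direct check that $\overline\varrho$ is itself injective --- the one step you flag but leave open. It does go through: $[1_a\lt 1_a][e]=[1_a\lt 1_a][e']$ means $[e\lt 1_x]=[e'\lt 1_x]$ by \eqref{eq:lt_1}, and since $(e\lt 1_x)\lt 1_x=e\lt 1_x$ by \eqref{eq:lt_mp}, equality of these classes forces $e\lt 1_x=e'\lt 1_x$, i.e. $[e]=[e']$; the $\rt$ side is dual. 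So there is no gap, only a different and arguably more economical packaging of the adjunction.
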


\begin{proof}
A to-be-left-adjoint $S$ of the inclusion functor $J$ is constructed as follows.
For a weak distributive law $\varrho:\cE\cM\to \cM\cE$, by Proposition
\ref{prop:functor_i} there are identity-on-objects functors $i$ from $\cE$ and 
from $\cM$ to the associated product category $\cM_\varrho\cE$. Hence we may
consider  the subcategories $i(\cE)$ and $i(\cM)$ of $\cM_\varrho\cE$. An
(evidently injective) weak distributive law $S(\varrho:\cE \cM\to
\cM\cE):=({\widetilde \varrho}:i(\cE)i(\cM)\to i(\cM)i(\cE))$ is given by
\begin{eqnarray*}
{\widetilde \varrho}(i(e),i(m))&&\hspace{-.6cm}=
{\widetilde \varrho}(
\xymatrix{b\ar@{->>}[r]^-{e\lt 1_b}&1_c.1_c\  \ar@{>->}[r]^-{1_c\rt 1_c}&c},
\xymatrix{a\ar@{->>}[r]^-{1_a\lt 1_a}&1_a.1_a\  \ar@{>->}[r]^-{1_b\rt m}&b}
):=\\
&&\hspace{-.3cm}
(\xymatrix{e.m\ar@{->>}[r]^-{1_{e.m}\lt 1_{e.m}}&e.m\  \ar@{>->}[r]^-{e\rt m}&c},
\xymatrix{a\ar@{->>}[r]^-{e\lt m}&e.m\  \ar@{>->}[r]^-{1_{e.m}\rt
    1_{e.m}}&e.m})
=(i(e\rt m),i(e\lt m)).
\end{eqnarray*}
Observe that the pair $(i(e\rt m),i(e\lt m))$ is uniquely determined by the
composite morphism
$$
i(e\rt m)i(e\lt m)=(e\rt m,e\lt m)=i(e)i(m)
$$
in $\cM_\varrho\cE$, cf. Lemma \ref{lem:rho_commutes}. Thus the 2-cell
$\widetilde \varrho$ in $\SM$ is well-defined. Since $i$ stands for functors
and $\varrho$ was a weak distributive law in $\SM$, so is $\widetilde
\varrho$ and $(i:\cE\to i(\cE),i:\cM\to i(\cM))$ is a morphism of weak
distributive laws.

A morphism $(Q_\cE:\cE\to \cE'$, $Q_\cM:\cM\to \cM')$ of weak distributive
laws induces functors ${\widetilde Q}_\cE:i(\cE)\to i'(\cE')$ and ${\widetilde 
Q}_\cM:i(\cM)\to i'(\cM')$,
\begin{eqnarray*}
{\widetilde Q}_\cE(i(e))&=&
{\widetilde Q}_\cE(
\xymatrix{
a\ar@{->>}[r]^-{e\lt 1_a}&1_b.1_b\  \ar@{>->}[r]^-{1_b\rt 1_b}&b}):=\\
&&(\xymatrix@=40pt{
q(a)\ar@{->>}[r]^-{Q_\cE(e\lt 1_a)}&q(1_b.1_b)\  
\ar@{>->}[r]^-{Q_\cM(1_b\rt 1_b)}&q(b)})=
i'(Q_\cE(e))\\
{\widetilde Q}_\cM(i(m))&=&
{\widetilde Q}_\cM(
\xymatrix{
a\ar@{->>}[r]^-{1_a\lt 1_a}&1_a.1_a\  \ar@{>->}[r]^-{1_b\rt m}&b}):=\\
&&(\xymatrix@=40pt{
q(a)\ar@{->>}[r]^-{Q_\cE(1_a\lt 1_a)}&q(1_a.1_a)\  
\ar@{>->}[r]^-{Q_\cM(1_b\rt m)}&q(b)})=
i'(Q_\cM(m)),
\end{eqnarray*}
where $q$ denotes the common object map of the functors $Q_\cE$ and
$Q_\cM$. The last ones of the above equalities are obtained using that 
$Q_\cE$ and $Q_\cM$ constitute a morphism of weak distributive laws. These
functors render commutative the outer square in the following diagram of
2-cells in $\SM$,
$$
\xymatrix{
f_q\cE\cM \ar[rr]^-{Q_\cE \cM}\ar[ddd]_-{f_q\varrho}\ar[rd]^-{f_q ii}&&
\cE'f_q\cM\ar[rr]^-{\cE' Q_\cM}\ar[d]^-{i'f_q i}&&
\cE'\cM'f_q\ar[ld]_-{i'i'f_q}\ar[ddd]^-{\varrho' f_q}\\
&f_q i(\cE)i(\cM)\ar[r]^-{{\widetilde Q}_\cE i(\cM)}
\ar[d]_-{f_q {\widetilde \varrho}} 
&i'(\cE')f_qi(\cM)\ar[r]^-{i'(\cE'){\widetilde Q}_\cM}
&i'(\cE')i'(\cM')f_q\ar[d]^-{{\widetilde\varrho}' f_q}&\\
&f_q i(\cM)i(\cE)\ar[r]^-{{\widetilde Q}_\cM i(\cE)}
&i'(\cM')f_qi(\cE)\ar[r]^-{i'(\cM'){\widetilde Q}_\cE}
&i'(\cM')i'(\cE')f_q&\\
f_q\cM\cE \ar[rr]^-{Q_\cM \cE}\ar[ru]^-{f_qii}&&
\cM'f_q\cE\ar[rr]^-{\cM' Q_\cE}\ar[u]_-{i'f_q i}&&
\cM'\cE'f_q\ar[lu]_-{i'i'f_q}
}
$$
where $f_q$ is the 1-cell corresponding to the function $q$, see
\ref{as:SetMat}. Since the 2-cell $f_q ii$ appearing in the upper left corner
is epi, this proves that also the inner square of the diagram commutes; that
is, $S(Q_\cE,Q_\cM):=({\widetilde Q}_\cE,{\widetilde Q}_\cM)$ is a morphism of
weak distributive laws. Moreover, there is a commutative square of morphisms
of weak distributive laws:
$$
\xymatrix{
(\varrho:\cE\cM\to \cM\cE)\ar[rr]^-{(i,i)}\ar[d]_-{(Q_\cE,Q_\cM)}&&
({\widetilde\varrho}:i(\cE)i(\cM)\to i(\cM)i(\cE))
\ar[d]^-{({\widetilde Q}_\cE,{\widetilde Q}_\cM)}\\
(\varrho':\cE'\cM'\to \cM'\cE')\ar[rr]^-{(i',i')}&&
({\widetilde\varrho}':i'(\cE')i'(\cM')\to i'(\cM')i'(\cE'))
}
$$
If $\varrho:\cE\cM\to \cM\cE$ is an injective weak distributive law in $\SM$,
then the functors $\cE \stackrel i \rightarrow \cM_\varrho\cE \stackrel i
\leftarrow \cM$ in Proposition \ref{prop:functor_i} factorize through
isomorphisms $i:\cE \to i(\cE)$  and $i:\cM \to i(\cM)$. By the above
considerations, these amount to an isomorphism $(\varrho:\cE\cM\to \cM\cE)\to
SJ(\varrho:\cE\cM\to \cM\cE)$ of weak distributive laws in $\SM$, natural with
respect to morphisms of weak distributive laws. The counit $\varepsilon:SJ\to
1$ of the adjunction is its inverse.
The unit $\eta: 1\to JS$ is given by the morphism $(i:\cE\to i(\cE),i:\cM\to
i(\cM))$ of weak distributive laws, natural in the arbitrary weak distributive
law $\varrho: \cE\cM\to \cM\cE$. This morphism is taken by $S$ to the identity
morphism $(i(\cE) \to i(\cE), i(\cM) \to i(\cM))$ which is equal to the value
of $\varepsilon S$ at the same object. Finally, at an injective weak
distributive law $\varrho:\cE\cM\to \cM\cE$, $\eta J$ is the isomorphism $(\cE
\to i(\cE), \cM \to i(\cM))$, with the inverse $J\varepsilon$.  
\end{proof}

Let us stress that, for an arbitrary weak distributive law $\varrho:\cE\cM\to
\cM\cE$, the injective weak distributive law $S(\varrho:\cE\cM\to \cM\cE)$
constructed in the proof of Proposition \ref{prop:retract} is still {\em
weak}. This can be seen by noting that the associated idempotent 2-cell
$i(\cM)i(\cE)\to i(\cM)i(\cE)$ in $\SM$ (cf. \ref{as:wdl}) is given by
functions   
$$
(i(m),i(e))\mapsto (i(1_c \rt m),i(e\lt 1_a)),
$$
for any morphisms $e:a \emor b$ in $\cE$ and $m:b \mmor c$ in $\cM$ -- which
are not identity functions unless $b=1_b.1_b$.
On the other hand, by the first identity in \eqref{eq:rt_1}, by the
second identity in \eqref{eq:lt_1} and by Lemma \ref{lem:rho_commutes},
$$
i(1_c \rt m)i(e\lt 1_a)=
i(m)i(1_b\rt 1_b)i(1_b\lt 1_b)i(e)=
i(m)i(1_b)i(1_b)i(e)=
i(m)i(e).
$$ 
Thus the product categories $i(\cM)_{S(\varrho)}i(\cE)$ and $\cM_\varrho \cE$
coincide. 

%%%%%%%%%%%%%%%%%%%%%%%%%%%%%%%%  SEC 2 %%%%%%%%%%%%%%%%%%%%%%%%%%%%%%%%%%%

\section{Bilinear factorization systems}
\label{sec:demifact}
 
\begin{aussage} {\bf Bilinear factorization system.} 
\label{as:ds_fact}
By a bilinear
  factorization system on a small category $\cC$ we mean subcategories
\footnote{Throughout, we use the term {\em subcategory} $\cE\hookrightarrow
  \cC$ `up-to isomorphism' -- i.e. in the loose sense that there is an
  identity-on-objects functor $i:\cE\to \cM$ which acts {\em injectively} on
  the morphisms -- we do not require $i$ to be an identity map on the
  morphisms.} 
$\cE\stackrel{i}{\hookrightarrow}
\cC \stackrel{i}{\hookleftarrow} 
\cM$,
  both containing all identity morphisms, together with a 2-cell $\delta:
  \cC\to \cM\cE$ in $\SM$,
$$
\cC(a,b)\to \sum_x \cM(x,b) \x \cE(a,x),\qquad 
g\mapsto \big( \mu(g):F(g) \mmor b, \epsilon(g):a\emor F(g)\big),
$$
providing a section for $\cM\cE\stackrel {ii} \hookrightarrow
\cC\cC\stackrel{\circ}{\to}\cC$ and rendering commutative the following
diagrams.  
$$
\xymatrix@R=10pt{
\cM\cC\ar[d]^-{i\cC} \ar[r]^-{\cM\delta}&
\cM\cM\cE \ar[dd]_-{\circ \cE}\\
\cC\cC\ar[d]^-{\circ}&\\
\cC\ar[r]^-{\delta}&
\cM\cE
}\qquad 
\xymatrix@R=10pt{
\cC\cE\ar[d]^-{\cC i}\ar[r]^-{\delta\cE} &\cM\cE\cE\ar[dd]_-{\cM\circ}\\
\cC\cC\ar[d]^-{\circ}&\\
\cC\ar[r]^-{\delta}&
\cM\cE
}
$$
In other words, for all morphisms $e:a\emor b$ in $\cE$, $g:b\to c$ in $\cC$
and $m:c\mmor d$ in $\cM$, there is a factorization $g=i\mu(g)i\epsilon(g)$
and the `$\cE$-, and $\cM$-linearity conditions'
\begin{eqnarray}
&\epsilon(i(m)g)=\epsilon(g)\quad\ \qquad
&\epsilon(gi(e))=\epsilon(g)e \label{eq:e_bilin}\\
&\mu(i(m)g)=m\mu(g)\qquad
&\mu(gi(e))=\mu(g) \label{eq:m_bilin}
\end{eqnarray}
hold. 
\end{aussage}

Clearly, any strict factorization system is bilinear. We could not find in the
literature, however, any non-strict factorization system which is bilinear. In
order to show that such factorization systems do exist, let stand here some
simple examples. 

\begin{example}\label{ex:order}
Let ${\mathcal P}$ be a {\em proset bounded from below}; that
is, a set equipped with a reflexive and transitive relation $\leq$ and an
element $\perp\in {\mathcal P}$ such that $\perp \leq a$ for all $a\in
{\mathcal P}$. For example, ${\mathcal P}$ can be the set of small sets with
$\leq$ denoting subsets and $\perp$ being the empty set. As another example,
${\mathcal P}$ can be chosen to be the set of positive integers with $p\leq q$
whenever $p$ divides $q$, in which case $\perp$ stands for the positive
integer $1$. 

Consider an associated category (in fact a groupoid) whose objects are the
elements of ${\mathcal P}$; and in which there is precisely one morphism
between any pair of objects. Let $\cE$ (resp. $\cM$) be the subcategory that
contains a morphism $p\to q$ if and only if $q\leq p$ (resp. $p\leq
q$). Clearly, both $\cE$ and $\cM$ contain all identity morphisms. But this is
not a strict factorization system as there is a morphism $p\emor l \mmor q$
whenever $l\leq p$ and $l\leq q$. This can be made a bilinear factorization
system, however, via $\epsilon(p\to q)=p \emor \perp$ and $\mu(p\to q)=\perp
\mmor q$. 
\end{example}

\begin{example}\label{ex:groupoid}
Let $\cC$ be a category with two objects $1$ and $2$, such that there is an
isomorphism $f:1\to 2$. Let $\cE$ be the subcategory containing all morphisms
in $\cC(1,1)\cup \cC(1,2)\cup \cC(2,2)$ and let $\cM$ be the subcategory
containing the identity morphisms $1_1$ and $1_2$ and the morphism
$f^{-1}:2\to 1$. Both $\cE$ and $\cM$ contain all identity morphisms. But this
is not a strict factorization system as $f^{-1}fg=g=1_1g$ for all $g\in
\cC(1,1)$. However, it is a bilinear factorization system via  
$$
\begin{array}{ll}
\epsilon(g)=\epsilon(fg)=fg\qquad \qquad 
&\epsilon(fgf^{-1})=\epsilon(gf^{-1})=fgf^{-1}\\
\mu(g)=\mu(gf^{-1})=f^{-1}\qquad \qquad 
&\mu(fg)=\mu(fgf^{-1})=1_2,
\end{array}
$$
for all $g\in \cC(1,1)$.
\end{example}

\begin{example}\label{ex:non_triv}
Consider a category $\cC$ with three objects $1$, $2$ and $3$ and morphism set
generated by three morphisms $f:1\to 2$, $p:2\to 3$ and $q:3\to 1$, modulo the
relations $qpf=1_1$ and $fqp=1_2$. (That is, the non-identity morphisms in
$\cC$ are $f:1\to 2$, $p:2\to 3$, $q:3\to 1$, $qp=f^{-1}:2\to 1$, $fq:3\to 2$,
$pf:1\to 3$ and $pfq:3\to 3$.) 
Let $\cE$ and $\cM$ be the subcategories with the same objects objects $1$,
$2$ and $3$; whose non-identity morphisms are $\{f,fq\}$ and
$\{p,qp=f^{-1}\}$, respectively. This is not a strict factorization system
since $f^{-1}f=1_1=1_11_1$. It is, however, a bilinear factorization system
via
$$
\begin{array}{ll}
\epsilon(1_1)=\epsilon(f)=\epsilon(pf)=f\quad
&\epsilon(q)=\epsilon(fq)=\epsilon(pfq)=fq\\
\epsilon(1_2)=\epsilon(p)=\epsilon(qp=f^{-1})=1_2\quad
&\epsilon(1_3)=1_3\\
\mu(1_1)=\mu(q)=\mu(qp=f^{-1})=f^{-1}\quad 
&\mu(p)=\mu(pf)=\mu(pfq)=p\\
\mu(1_2)=\mu(f)=\mu(fq)=1_2\quad 
&\mu(1_3)=1_3.
\end{array}
$$
\end{example}

\begin{aussage} {\bf Any weak distributive law in $\SM$ determines a bilinear
    factorization system.} \label{as:dl>fac}
Without loss of generality, we may take an injective weak distributive law
$\varrho:\cE\cM \to \cM\cE$. As if for it is not injective, we can replace it
with its image under the functor $S$ in Proposition \ref{prop:retract}. Thus we
have subcategories $\cE \stackrel{i}{\hookrightarrow} \cM_\varrho \cE
\stackrel{i}{\hookleftarrow} \cM$ which contain all identity morphisms. We put  
\begin{eqnarray*}
&\epsilon\big(
\xymatrix{
a\ar@{->>}[r]^-{e\lt 1_a}&
1_b.1_b \ \ar@{>->}[r]^-{1_c \rt m}&
c}\big):=&\xymatrix{
\ \ a\ \ \ \ar@{->>}[r]^-{e\lt 1_a}&
1_b.1_b}\qquad \textrm{and}\\
&\mu\big(
\xymatrix{
a\ar@{->>}[r]^-{e\lt 1_a}&
1_b.1_b \ \ar@{>->}[r]^-{1_c \rt m}&
c}\big):=&\xymatrix{
1_b.1_b \ \ar@{>->}[r]^-{1_c \rt m}&
c.}
\end{eqnarray*}
Use \eqref{eq:inif} to see that, for any morphism 
$g=\big(
\xymatrix{
a\ar@{->>}[r]^-{e\lt 1_a}&
1_b.1_b \ \ar@{>->}[r]^-{1_c \rt m}&
c}\big)$ in $\cM_\varrho\cE$,
$$
i\mu(g)i\epsilon(g)=
i(1_c \rt m)i(e\lt 1_a)=
\big(
\xymatrix{
a\ar@{->>}[r]^-{e\lt 1_a}&
1_b.1_b \ \ar@{>->}[r]^-{1_c \rt m}&
c}\big)=g.
$$
Since any morphism in $\cM_\varrho \cE$ can be written in the form $i(m)i(f)$
in terms of some (non-uniquely chosen) morphisms $f:b\emor c$ in $\cE$ and
$m:c\mmor d$ in $\cM$, the `$\cE$-linearity conditions' follow for any
morphism $e:a\emor b$ in $\cE$ by  
\begin{eqnarray*}
&&\epsilon\big(i(m)i(f)i(e)\big)=
\epsilon\big(i(m)i(fe)\big)=
fe \lt 1_a =
(f\lt 1_b)e=
\epsilon\big(i(m)i(f)\big)e; \\
&&\mu\big(i(m)i(f)i(e)\big)=
\mu\big(i(m)i(fe)\big)=
1_d \rt m =
\mu\big(i(m)i(f)\big).
\end{eqnarray*}
The third equality in the first line follows by the second condition in
\eqref{eq:lt_1}. The `$\cM$-linearity conditions' are verified symmetrically.
\end{aussage}

\begin{aussage} {\bf Any bilinear factorization system determines a weak
  distributive law in $\SM$.} \label{as:fac>dl}
For a bilinear factorization system $\cE\stackrel i \hookrightarrow
\cC\stackrel i \hookleftarrow \cM$ with $\delta : \cC\to \cM\cE$, we put 
$$
\varrho:=\big(
\xymatrix{
\cE\cM\ \ar@{^{(}->}[r]^{ii}&
\cC\cC\ar[r]^-\circ&
\cC\ar[r]^-\delta&
\cM\cE
}
\big).
$$
It renders commutative the four diagrams in \ref{as:wdl}:
$$
\xymatrix@R=16pt @C=16pt{
\cE\cE\cM\ar[rrr]^-{\circ \cM}\ar[d]_-{\cE ii}&&&
\cE\cM\ar[d]^-{ii}\\
\cE\cC\cC\ar[rr]^-{i\cC\cC}\ar[d]_-{\cE\circ}&&
\cC\cC\cC \ar[r]^-{\circ \cC}\ar@/^1.5pc/[ddd]^-{\cC\circ}&
\cC\cC\ar[ddd]^-\circ\\
\cE\cC\ar[d]_-{\cE \delta}\ar@{=}[rrd]\ar@{}[rd]|(.7){(*)}&&&\\
\cE\cM\cE\ar[r]_-{\cE ii}\ar[d]_-{ii\cE}&
\cE\cC\cC\ar[r]_-{\cE \circ}\ar[d]^-{i\cC\cC}&
\cE\cC\ar[d]_-{i\cC}&\\
\cC\cC\cE\ar[r]^-{\cC\cC i}\ar[d]_-{\circ \cE}&
\cC\cC\cC\ar[r]^-{\cC \circ}\ar[d]^-{\circ \cC}&
\cC\cC\ar[r]^-{\circ}&
\cC\ar[dd]^-{\delta}\\
\cC\cE\ar[r]^-{\cC i}\ar[d]_-{\delta \cE}&
\cC\cC\ar[urr]_-{\circ}\ar@{}[rrd]|-{(**)}&&\\
\cM\cE\cE\ar[rrr]_-{\cM \circ}&&&
\cM\cE}
\quad\qquad
\xymatrix@R=16pt @C=12.5pt{
\cE\cM\cM\ar[rrr]^-{\cE\circ}\ar[d]_-{ii\cM}&&&
\cE\cM\ar[d]^-{ii}\\
\cC\cC\cM\ar[rr]^-{\cC\cC i}\ar[d]_-{\circ \cM}&&
\cC\cC\cC \ar[r]^-{\cC\circ}\ar@/^1.5pc/[ddd]^-{\circ \cC}&
\cC\cC\ar[ddd]^-\circ\\
\cC\cM\ar[d]_-{\delta \cM}\ar@{=}[rrd]\ar@{}[rd]|(.7){(*)}&&&\\
\cM\cE\cM\ar[r]_-{ii\cM}\ar[d]_-{\cM ii}&
\cC\cC\cM\ar[r]_-{\circ \cM}\ar[d]^-{\cC\cC i}&
\cC\cM\ar[d]_-{\cC i}&\\
\cM\cC\cC\ar[r]^-{i\cC\cC}\ar[d]_-{\cM\circ}&
\cC\cC\cC\ar[r]^-{\circ \cC}\ar[d]^-{\cC\circ}&
\cC\cC\ar[r]^-{\circ}&
\cC\ar[dd]^-{\delta}\\
\cM\cC\ar[r]^-{i\cC}\ar[d]_-{\cM \delta}&
\cC\cC\ar[urr]_-{\circ}\ar@{}[rrd]|-{(**)}&&\\
\cM\cM\cE\ar[rrr]_-{\circ \cE}&&&
\cM\cE}
$$
The triangles labelled by $(*)$ commute since $\delta$ is a section of
$\circ.ii$ and the polygons $(**)$ commute by the $\cE$-linearity and by the
$\cM$-linearity of $\delta$, respectively. The other regions commute since the
$i$s are functors, since the composition in $\cC$ is associative and by the
middle four interchange law in $\SM$. Moreover, denoting by ${\bf 1}$ the unit
of a monad in $\SM$ (describing the identity morphisms in the corresponding
category), also the following diagrams commute. 
$$
\xymatrix @C=19pt{
\cM\ar[rrr]^-{\cM {\bf 1}{\bf 1}}\ar[dddd]_-{{\bf 1} \cM}
\ar[drrr]^-{\cM{\bf 1}{\bf 1}}\ar[rrrdd]^-{\cM {\bf 1}}\ar[rdd]^-i&&&
\cM\cE\cM\ar[d]^-{\cM ii}\\
&&&\cM\cC\cC\ar[d]^-{\cM\circ}\\
&\cC\ar[rd]^-{\cC{\bf 1}}\ar[dd]^{{\bf 1}\cC}\ar@{=}[rdd]&&
\cM\cC\ar[d]^-{\cM\delta}\ar[ld]^-{i\cC}\\
&&\cC\cC\ar[d]^-{\circ}\ar@{}[rd]|-{(**)}&
\cM\cM\cE\ar[d]^-{\circ \cE}\\
\cE\cM\ar[r]_-{ii}&
\cC\cC\ar[r]_-\circ &
\cC\ar[r]_-\delta&
\cM\cE
}\quad \qquad
\xymatrix @C=19pt{
\cE\ar[rrr]^-{{\bf 1}{\bf 1}\cE}\ar[dddd]_-{\cE{\bf 1}}
\ar[drrr]^-{{\bf 1}{\bf 1}\cE}\ar[rrrdd]^-{{\bf 1}\cE}\ar[rdd]^-i&&&
\cE\cM\cE\ar[d]^-{ii\cE}\\
&&&\cC\cC\cE\ar[d]^-{\circ \cE}\\
&\cC\ar[rd]^-{{\bf 1}\cC}\ar[dd]^{\cC{\bf 1}}\ar@{=}[rdd]&&
\cC\cE\ar[d]^-{\delta \cE}\ar[ld]^-{\cC i}\\
&&\cC\cC\ar[d]^-{\circ}\ar@{}[rd]|-{(**)}&
\cM\cE\cE\ar[d]^-{\cM\circ}\\
\cE\cM\ar[r]_-{ii}&
\cC\cC\ar[r]_-\circ &
\cC\ar[r]_-\delta&
\cM\cE
}
$$
Here again, the same commutative regions $(**)$ appear. The other regions
commute since the $i$s are functors, by triviality of compositions with
identity morphisms and by the middle four interchange law in $\SM$.
\end{aussage} 

\begin{example}
The bilinear factorization system in Example \ref{ex:order} determines a weak
distributive law $\varrho:\cE\cM\to\cM\cE$ in $\SM$ as follows. For $a\leq b$
and $c\leq b$ in ${\mathcal P}$, consider $m:a\mmor b$ and $e:b\emor c$. Then
$e.m=\perp$; $e\lt m$ is {\em the} morphism $a\emor \perp$ and $e\rt m$ is {\em
  the} morphism $\perp\mmor c$.
\end{example}

\begin{example}
The bilinear factorization system in Example \ref{ex:groupoid} determines a
weak distributive law $\varrho:\cE\cM\to \cM\cE$ in $\SM$ as follows. For all
$g\in \cC(1,1)$, 
$$
\begin{array}{ll}
\varrho(g,1_1)=\big(\xymatrix{1\ar@{->>}[r]^-{fg}&2 \ 
  \ar@{>->}[r]^-{f^{-1}}&1}\big);\qquad
&\varrho(g,f^{-1})=\big(\xymatrix{2\ar@{->>}[r]^-{fgf^{-1}}&2 \ 
  \ar@{>->}[r]^-{f^{-1}}&1}\big);\\ 
\varrho(fgf^{-1},1_2)=\big(\xymatrix{2\ar@{->>}[r]^-{fgf^{-1}}&2 \ 
  \ar@{>->}[r]^-{1_2}&2}\big);\qquad
&\varrho(fg,1_1)=\big(\xymatrix{1\ar@{->>}[r]^-{fg}&2 \ 
  \ar@{>->}[r]^-{1_2}&2}\big);\\
\varrho(fg,f^{-1})=\big(\xymatrix{2\ar@{->>}[r]^-{fgf^{-1}}&2 \ 
  \ar@{>->}[r]^-{1_2}&2}\big).\qquad &
\end{array}
$$
\end{example}

\begin{example}
The bilinear factorization system in Example \ref{ex:non_triv} determines a
weak distributive law $\varrho:\cE\cM\to \cM\cE$ in $\SM$ as follows. 
$$
\begin{array}{ll}
\varrho(1_1,1_1)=\big(\xymatrix{1\ar@{->>}[r]^-{f}&2 \ 
  \ar@{>->}[r]^-{f^{-1}}&1}\big);\qquad
&\varrho(f,1_1)=\big(\xymatrix{1\ar@{->>}[r]^-{f}&2 \ 
  \ar@{>->}[r]^-{1_2}&2}\big);\\
\varrho(1_2,1_2)=\big(\xymatrix{2\ar@{->>}[r]^-{1_2}&2 \ 
  \ar@{>->}[r]^-{1_2}&2}\big);\qquad
&\varrho(p,1_2)=\big(\xymatrix{2\ar@{->>}[r]^-{1_2}&2 \ 
  \ar@{>->}[r]^-{p}&3}\big);\\
\varrho(f^{-1},1_2)=\big(\xymatrix{2\ar@{->>}[r]^-{1_2}&2 \ 
  \ar@{>->}[r]^-{f^{-1}}&1}\big);\qquad
&\varrho(1_3,1_3)=\big(\xymatrix{3\ar@{->>}[r]^-{1_3}&3 \ 
  \ar@{>->}[r]^-{1_3}&3}\big);\\
\varrho(q,1_3)=\big(\xymatrix{3\ar@{->>}[r]^-{fq}&2 \ 
  \ar@{>->}[r]^-{f^{-1}}&1}\big);\qquad
&\varrho(fq,1_3)=\big(\xymatrix{3\ar@{->>}[r]^-{fq}&2 \ 
  \ar@{>->}[r]^-{1_2}&2}\big);\\
\varrho(1_3,p)=\big(\xymatrix{2\ar@{->>}[r]^-{1_2}&2 \ 
  \ar@{>->}[r]^-{p}&3}\big);\qquad
&\varrho(q,p)=\big(\xymatrix{2\ar@{->>}[r]^-{1_2}&2 \ 
  \ar@{>->}[r]^-{f^{-1}}&1}\big);\\
\varrho(fq,p)=\big(\xymatrix{2\ar@{->>}[r]^-{1_2}&2 \ 
  \ar@{>->}[r]^-{1_2}&2}\big);\qquad
&\varrho(1_1,f^{-1})=\big(\xymatrix{2\ar@{->>}[r]^-{1_2}&2 \ 
  \ar@{>->}[r]^-{f^{-1}}&1}\big);\\
\varrho(f,f^{-1})=\big(\xymatrix{2\ar@{->>}[r]^-{1_2}&2 \ 
  \ar@{>->}[r]^-{1_2}&2}\big).
\end{array}
$$
\end{example}

Our next aim is to prove that the constructions in \ref{as:dl>fac} and
\ref{as:fac>dl} are mutual inverses in an appropriate sense.

\begin{theorem} \label{thm:iso_T}
For any bilinear factorization system $\cE \stackrel i \hookrightarrow
  \cC \stackrel i \hookleftarrow \cM$, $\cC$ is isomorphic to the product
  category $\cM_\varrho \cE$ corresponding to the weak distributive law
  $\varrho$ in \ref{as:fac>dl}.
\end{theorem}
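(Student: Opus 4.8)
The plan is to exhibit mutually inverse, identity-on-objects functors between $\cC$ and $\cM_\varrho\cE$ and to read off all of their properties directly from the section and linearity axioms of the bilinear factorization system; throughout I write $i$ for the inclusions $\cE\hookrightarrow\cC\hookleftarrow\cM$ as in the statement, so that by \ref{as:fac>dl} one has $e\rt m=\mu(i(e)i(m))$ and $e\lt m=\epsilon(i(e)i(m))$. Recall from \ref{as:prod_cat} that a morphism of $\cM_\varrho\cE$ is a pair $(m,e)$ lying in the image of the idempotent $(m,e)\mapsto(1_c\rt m,e\lt 1_a)$, which for the present $\varrho$ reads $(m,e)\mapsto(\mu(i(m)),\epsilon(i(e)))$; thus the morphisms of $\cM_\varrho\cE$ are exactly the pairs satisfying $m=\mu(i(m))$ and $e=\epsilon(i(e))$. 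I would set $\Phi\colon\cM_\varrho\cE\to\cC$, $(m,e)\mapsto i(m)i(e)$, and $\Psi\colon\cC\to\cM_\varrho\cE$, $g\mapsto(\mu(g),\epsilon(g))$, both fixing objects.

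First I would check that $\Phi$ is a functor. It preserves identities since the identity of $a$ in $\cM_\varrho\cE$ is $(\mu(1_a),\epsilon(1_a))$ and $i\mu(1_a)i\epsilon(1_a)=1_a$. For composition, writing two composable morphisms in the canonical form of \eqref{eq:comp_ErhoM}, the only nontrivial point is the identity $i(e\rt m)i(e\lt m)=i(e)i(m)$ in $\cC$; but this is just the factorization $h=i\mu(h)i\epsilon(h)$ evaluated at $h=i(e)i(m)$, since $e\rt m=\mu(i(e)i(m))$ and $e\lt m=\epsilon(i(e)i(m))$. The relation $\Phi\Psi=1_\cC$ is then immediate from the same factorization: $\Phi\Psi(g)=i\mu(g)i\epsilon(g)=g$ because $\delta$ is a section of $\circ.ii$.

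The substantive step is that $\Psi$ really takes values in $\cM_\varrho\cE$, i.e. that $(\mu(g),\epsilon(g))$ is fixed by the idempotent. I would first record four consequences of \eqref{eq:e_bilin}--\eqref{eq:m_bilin}, obtained by composing with an identity morphism: for $e\colon a\emor b$ in $\cE$ and $m\colon b\mmor c$ in $\cM$ one has $\mu(i(m))=m\mu(1_b)$, $\epsilon(i(m))=\epsilon(1_b)$, $\mu(i(e))=\mu(1_b)$ and $\epsilon(i(e))=\epsilon(1_b)e$. Applying $\cM$-linearity to $g=i\mu(g)\,i\epsilon(g)$ gives $\mu(g)=\mu(g)\mu(i\epsilon(g))=\mu(g)\mu(1_{F(g)})=\mu(i\mu(g))$, and $\cE$-linearity gives $\epsilon(g)=\epsilon(i\mu(g))\epsilon(g)=\epsilon(1_{F(g)})\epsilon(g)=\epsilon(i\epsilon(g))$, which are precisely the two fixed-point equations. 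Finally, for a morphism $(m,e)$ of $\cM_\varrho\cE$ with intermediate object $b$ the linearity axioms yield $\Psi\Phi(m,e)=(\mu(i(m)i(e)),\epsilon(i(m)i(e)))=(m\mu(i(e)),\epsilon(i(e)))$, which equals $(m,e)$ by $\mu(i(e))=\mu(1_b)$ together with the fixed-point equations $m=\mu(i(m))=m\mu(1_b)$ and $e=\epsilon(i(e))$. Hence $\Phi$ and $\Psi$ are mutually inverse identity-on-objects functors and $\cC\cong\cM_\varrho\cE$.

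Conceptually, the two relations just used---$(\circ.ii).\delta=1_\cC$ and $\delta.(\circ.ii)=$ the idempotent of \ref{as:prod_cat}---say that $\delta$ and $\circ.ii$ exhibit $\cC$ as a splitting of the very idempotent whose splitting defines $\cM_\varrho\cE$, so that the isomorphism is forced by uniqueness of idempotent splittings once one checks compatibility with the category structures. I expect the main obstacle to be exactly the verification in the third paragraph: keeping the four auxiliary identities and the relevant domains and codomains straight, and in particular confirming that $\delta$ lands in the image of the idempotent, since the functoriality of $\Phi$ and the relation $\Phi\Psi=1_\cC$ are both immediate from the section property.
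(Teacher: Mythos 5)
Your proof is correct and takes essentially the same route as the paper: your $\Phi$ and $\Psi$ are exactly the paper's mutually inverse identity-on-objects functors $T$ and $T^{-1}$, and your fixed-point equations $\mu(g)=\mu(i\mu(g))$ and $\epsilon(g)=\epsilon(i\epsilon(g))$ are precisely the paper's identities \eqref{eq:eie}. The only (harmless) organizational difference is that you verify functoriality for $\Phi=T$ via the section property and deduce it for $\Psi$ from bijectivity, whereas the paper checks directly that $T^{-1}$ preserves composition using bilinearity and the factorization property.
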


\begin{proof}
The objects of both categories $\cC$ and $\cM_\varrho \cE$ coincide by
construction. As explained in \ref{as:prod_cat}, the morphisms in 
$\cM_\varrho \cE$ are of the form
$$
\xymatrix{
a \ar@{->>}[rr]^-{\epsilon i(e)}&& F(1_b)\  \ar@{>->}[rr]^-{\mu i(m)}&&c,
}
$$
for morphisms $e:a\emor b$ in $\cE$ and $m:b\mmor c$ in $\cM$, where the same
notations are used as in \ref{as:ds_fact}. The stated isomorphism is given by 
$$
T:\cM_\varrho \cE \to \cC,\quad 
\big(
\xymatrix{
a \ar@{->>}[r]^-{\epsilon i(e)}& F(1_b)\  \ar@{>->}[r]^-{\mu i(m)}&c
}
\big)\ \mapsto \ 
\big(
\xymatrix{
a \ar[r]^-{i\epsilon i(e)}& F(1_b) \ar[r]^-{i\mu i(m)}&c
}
\big)
$$
and its inverse
$$
T^{-1}: \cC\to \cM_\varrho \cE,\qquad \qquad \qquad
\big(
\xymatrix{
a\ar[r]^-{g}&c
}
\big)\mapsto
\big(
\xymatrix{
a \ar@{->>}[r]^-{\epsilon(g)}& F(g)\  \ar@{>->}[r]^-{\mu(g)}&c
}
\big).
$$
$T^{-1}(g)$ is a morphism in $\cM_\varrho \cE$ as needed, since by the
bilinearity properties of $\epsilon$ and $\mu$, 
\begin{equation}\label{eq:eie}
\epsilon i\epsilon(g)=
\epsilon\big(i\mu(g)\,  i\epsilon(g)\big) =
\epsilon(g); \quad
\mu i\mu(g) =
\mu\big(i\mu(g) \, i\epsilon(g)\big) =
\mu(g).
\end{equation}
Evidently, $T^{-1}$ preserves identity morphisms. It preserves compositions as
well, since for any morphisms $g:a\to c$ and $h:c\to d$ in $\cC$, 
\begin{eqnarray*}
T^{-1}(h)T^{-1}(g)&=&
\big(\mu(h),\epsilon(h)\big) \big(\mu(g),\epsilon(g)\big)=
\big(\mu(h)\mu(i\epsilon(h)\, i\mu(g)),\epsilon(i\epsilon(h)\,
i\mu(g))\epsilon(g) \big)\\
&=&\big(\mu(i\mu(h)\, i\epsilon(h)\, i\mu(g)\, i\epsilon(g)), 
\epsilon(i\mu(h)\, i\epsilon(h)\, i\mu(g)\, i\epsilon(g))\big)\\
&=&\big(\mu(hg),\epsilon(hg)\big)=
T^{-1}(hg).
\end{eqnarray*}
The third equality follows by the bilinearity of $\mu$ and $\epsilon$, see
\eqref{eq:e_bilin} and \eqref{eq:m_bilin}, and the fourth equality follows by
the factorization property. 

Composites of $T$ and $T^{-1}$ in both orders yield identity functors by
\eqref{eq:eie} and by the factorization property, respectively:
\begin{eqnarray*}
&&T^{-1}T\big(
\xymatrix @C=15pt{
a \ar@{->>}[r]^-{\epsilon i(e)}& F(1_b)\  \ar@{>->}[r]^-{\mu i(m)}&c
}
\big)=
T^{-1}\big(
\xymatrix{
a \ar[r]^-{i\epsilon i(e)}& F(1_b) \ar[r]^-{i\mu i(m)}&c
}
\big)=
\big(
\xymatrix @C=15pt{
a \ar@{->>}[r]^-{\epsilon i(e)}& F(1_b)\  \ar@{>->}[r]^-{\mu i(m)}&c
}
\big)\\
&&TT^{-1}\big(
\xymatrix{
a\ar[r]^-{g}&c
}
\big)=
T\big(
\xymatrix{
a \ar@{->>}[r]^-{\epsilon(g)}& F(g)\  \ar@{>->}[r]^-{\mu(g)}&c
}
\big)=
\big(
\xymatrix{
a\ar[r]^-{g}&c
}
\big).
\end{eqnarray*}
\end{proof}

\begin{aussage}{\bf Morphisms of bilinear factorization systems.}
A morphism $(\cE \stackrel i \hookrightarrow \cC \stackrel i \hookleftarrow
\cM)\to (\cE' \stackrel {i'}\hookrightarrow \cC'  \stackrel {i'}\hookleftarrow
\cM')$ of bilinear factorization systems is by definition a functor
$Q:\cC\to \cC'$ which restricts (along the inclusions $i$) to functors
$Q_\cE:\cE\to \cE'$ and $Q_\cM:\cM\to \cM'$ such that  
$$
Q_\cE(\epsilon(g))=\epsilon'(Q(g))\qquad 
Q_\cM(\mu(g))=\mu'(Q(g)),
$$
for any morphism $g$ in $\cC$ and $\mu$ and $\epsilon$ as in \ref{as:ds_fact}.

Bilinear factorization systems and their morphisms constitute a category. 
\end{aussage}

\begin{theorem}\label{thm:fac_eq_wdl}
The category of injective weak distributive laws in $\SM$ and the category of
bilinear factorization systems are equivalent.
\end{theorem}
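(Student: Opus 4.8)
The plan is to package the two constructions of \ref{as:dl>fac} and \ref{as:fac>dl} into a pair of mutually quasi-inverse functors. Write $\Phi$ for the assignment of \ref{as:dl>fac}, sending an injective weak distributive law $\varrho:\cE\cM\to\cM\cE$ to the bilinear factorization system $\cE\stackrel i\hookrightarrow\cM_\varrho\cE\stackrel i\hookleftarrow\cM$ equipped with the section $(\epsilon,\mu)$ defined there; and $\Psi$ for the assignment of \ref{as:fac>dl}, sending a bilinear factorization system $(\cE\stackrel i\hookrightarrow\cC\stackrel i\hookleftarrow\cM,\delta)$ to the weak distributive law $\varrho=\big(\cE\cM\stackrel{ii}{\hookrightarrow}\cC\cC\stackrel{\circ}{\to}\cC\stackrel{\delta}{\to}\cM\cE\big)$ on the same pair $(\cE,\cM)$. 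First I would record that both extend to functors. A morphism $(Q_\cE,Q_\cM)$ of weak distributive laws induces the functor $\cM_\varrho\cE\to\cM_{\varrho'}\cE'$ on product categories sending a reduced morphism $(m,e)$ to $(Q_\cM(m),Q_\cE(e))$ — preservation of the composition \eqref{eq:comp_ErhoM} being exactly the three defining equations of a morphism of weak distributive laws — and this functor is manifestly compatible with $(\epsilon,\mu)$, hence a morphism of bilinear factorization systems. Conversely a morphism $Q:\cC\to\cC'$ of bilinear factorization systems restricts to a pair $(Q_\cE,Q_\cM)$, and the three weak-distributive-law equations follow from the compatibility $Q_\cE\epsilon=\epsilon'Q$, $Q_\cM\mu=\mu'Q$ together with the definition of $\varrho$ from $\delta$.

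Next I would check that $\Psi$ lands in the subcategory of \emph{injective} weak distributive laws; here Theorem \ref{thm:iso_T} does the heavy lifting. That theorem supplies an isomorphism $T:\cM_\varrho\cE\to\cC$, and a short computation with the linearity conditions shows $T\circ i=i$ on each of $\cE$ and $\cM$; for instance $T(i(e))=i\mu(1_b)\,i\epsilon(1_b)\,i(e)=i(e)$, after rewriting $\epsilon(i(e))=\epsilon(1_b)\,e$ by $\cE$-linearity and collapsing via the factorization of $1_b$. Since the inclusions $\cE,\cM\hookrightarrow\cC$ are injective on morphisms and $T$ is invertible, the functors $i:\cE,\cM\to\cM_\varrho\cE$ of Proposition \ref{prop:functor_i} are injective on morphisms, which is precisely injectivity of $\Psi(\text{bfs})$. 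Moreover $T$ is compatible with the sections: its restrictions to $\cE$ and $\cM$ are identities, the section of $\Phi\Psi(\text{bfs})$ merely projects a reduced morphism onto its two components, and $\epsilon(T(m,e))=\epsilon(i(m)i(e))=\epsilon(i(e))=e$ by $\cM$-linearity together with the reducedness identity \eqref{eq:eie} (dually for $\mu$). Thus $T$ is an isomorphism of bilinear factorization systems, and its naturality $T'\circ\Phi\Psi(Q)=Q\circ T$ reduces to $i'(Q_\cM m)\,i'(Q_\cE e)=Q(i(m)i(e))$. This establishes $\Phi\Psi\cong\mathrm{id}$.

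For the opposite composite I would show $\Psi\Phi=\mathrm{id}$ on the nose. Starting from an injective $\varrho$ on $(\cE,\cM)$, the system $\Phi(\varrho)$ leaves $\cE$ and $\cM$ unchanged, so $\Psi\Phi(\varrho)$ is again a weak distributive law on $(\cE,\cM)$, with operations $e\lt'm=\epsilon(i(e)i(m))$ and $e\rt'm=\mu(i(e)i(m))$ and intermediate object $F(i(e)i(m))$. By Lemma \ref{lem:rho_commutes} the composite $i(e)i(m)$ in $\cM_\varrho\cE$ equals the reduced morphism $(e\rt m,e\lt m)$, and reading off its $\epsilon$- and $\mu$-parts through the formulas of \ref{as:dl>fac} returns $e\lt'm=e\lt m$, $e\rt'm=e\rt m$ and intermediate object $e.m$; hence $\Psi\Phi(\varrho)=\varrho$. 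The analogous check on morphisms shows $\Psi\Phi$ is the identity functor, and together with the natural isomorphism $\Phi\Psi\cong\mathrm{id}$ of the previous paragraph this exhibits the two categories as equivalent, indeed via an adjoint equivalence.

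I expect the main obstacle to be the bookkeeping in the injectivity step: verifying $T\circ i=i$ requires tracking the intermediate objects $F(g)$ through $T$ and using both linearity conditions \eqref{eq:e_bilin}--\eqref{eq:m_bilin}. In a genuinely non-strict system $F(1_b)$ need not be $b$ (cf. Example \ref{ex:order}), so $\epsilon(i(e))\neq e$ for general $e$ in $\cE$; one must instead use the invariance $F(g\,i(e'))=F(g)$ coming from $\cE$-linearity, and the reducedness identities \eqref{eq:eie}, to collapse the relevant composites. Once this is in hand, the remaining verifications — functoriality of $\Phi$ and $\Psi$ on morphisms and naturality of $T$ — are the same middle-four-interchange and linearity manipulations already carried out in \ref{as:fac>dl}.
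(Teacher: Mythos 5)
Your proposal is correct and follows essentially the same route as the paper's proof: both directions are packaged into functors (your $\Phi$, $\Psi$ are the paper's two functors, with the same morphism-level formula \eqref{eq:Q}), injectivity of $\Psi(\mathrm{bfs})$ is deduced from the isomorphism $T$ of Theorem \ref{thm:iso_T} via $T\circ i=i$, the composite on the weak-distributive-law side is the identity on the nose, and the other composite is naturally isomorphic to the identity with $T$ as the comparison isomorphism. Your use of Lemma \ref{lem:rho_commutes} to make the identity $\Psi\Phi=\mathrm{id}$ explicit merely fills in a step the paper labels as clear, so there is nothing to change.
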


\begin{proof}
We construct mutually inverse equivalence functors with respective object maps
in \ref{as:dl>fac} and \ref{as:fac>dl}.

A functor from the category of bilinear factorization systems to the
category of (injective) weak distributive laws is $\SM$ is given by 
$$
\big(
\xymatrix @=12pt{
(\cE \stackrel i \hookrightarrow \cC\stackrel i \hookleftarrow
  \cM)\ar[r]^-{Q}& 
(\cE' \stackrel {i'} \hookrightarrow \cC'\stackrel {i'} \hookleftarrow
  \cM')}\big) \mapsto 
\big(
\xymatrix @C=10pt{
(\cE\cM \ar[r]^-\varrho&\cM\cE) \ar[rr]^-{(Q_\cE,Q_\cM)}&&
(\cE'\cM' \ar[r]^-{\varrho'}&\cM'\cE')}
\big),
$$
where the weak distributive laws $\varrho$ and $\varrho'$ are constructed as
in \ref{as:fac>dl} and the functors $Q_\cE:\cE\to \cE'$ and $Q_\cM:\cM\to
\cM'$ are restrictions of $Q$. For the functor $T^{-1}$ in Theorem
\ref{thm:iso_T}, $T^{-1}i$ is equal to the functor associated in Proposition
\ref{prop:functor_i} to $\varrho$. Hence it follows by Theorem \ref{thm:iso_T}
that $\varrho$ is an injective weak distributive law. Since $Q$ is a morphism
of bilinear factorization systems, using the notations in \ref{as:wdl_SM} and
\ref{as:ds_fact} we obtain, for any compatible morphisms $e$ in $\cE$ and $m$
in $\cM$, 
\begin{eqnarray*}
Q_\cE(e\lt m)
&=& Q_\cE(\epsilon(i(e)\, i(m)))
=\epsilon'(Q(i(e)\, i(m)))
=\epsilon'(Qi(e)\, Qi(m)) \\
&=&\epsilon'(i'Q_\cE(e)\, i'Q_\cM(m))
=Q_\cE(e)\lt Q_\cM(m)
\end{eqnarray*}
and symmetrically, $Q_\cM(e\rt m)=Q_\cE(e)\rt Q_\cM(m)$. This proves that
$(Q_\cE,Q_\cM)$ is a morphism of weak distributive laws.

A functor from the category of injective weak distributive laws in $\SM$ to
the category of bilinear factorization systems
$$
\big(\!\!
\xymatrix @C=10pt{
(\cE\cM \ar[r]^-\varrho&\cM\cE) \ar[rr]^-{(Q_\cE,Q_\cM)}&&
(\cE'\cM' \ar[r]^-{\varrho'}&\cM'\cE')}
\!\! \big) \!\!\mapsto \!\!
\big(\!\!
\xymatrix @=12pt{
(\cE \hookrightarrow \cM_\varrho \cE \hookleftarrow \cM)\ar[r]^-{Q}&
(\cE' \hookrightarrow \cM'_{\varrho'} \cE'\hookleftarrow \cM')}\!\!\big)
$$
is defined by the functor $Q:\cM_\varrho\cE\to \cM'_{\varrho'}\cE'$,
\begin{eqnarray} \label{eq:Q}
\big(
\xymatrix{
a\ar@{->>}[r]^-{e\lt 1_a}&1_b.1_b\ \ar@{>->}[r]^-{1_c\rt m}&c}\big) &\mapsto&
\big(
\xymatrix @C=40pt{
q(a)\ar@{->>}[r]^-{Q_\cE(e\lt 1_a)}&q(1_b.1_b)\ 
\ar@{>->}[r]^-{Q_\cM(1_c\rt m)}&q(c)
}\big)=\\
&&\big(
\xymatrix @C=40pt{
q(a)\ar@{->>}[r]^-{Q_\cE(e)\lt 1_{q(a)}}&1_{q(b)}.1_{q(b)}
\ \ar@{>->}[r]^-{1_{q(c)}\rt Q_\cM(m)}&q(c)}
\big),\nonumber
\end{eqnarray}
where $q$ is the common object map of the functors $Q_\cE$ and $Q_\cM$ and the
notations in \ref{as:wdl_SM} are used. In order to see that $Q$ is a 
morphism of bilinear factorization systems, note that for the functors $i$
and $i'$ as in Proposition \ref{prop:functor_i} and for any morphism $e:a\emor
b$ in $\cE$, 
\begin{eqnarray*}
i'Q_\cE(e)
&=& \big(\xymatrix @C=40pt {
q(a)\ar@{->>}[r]^-{Q_\cE(e)\lt 1_{q(a)}}&1_{q(b)}.1_{q(b)} \ 
\ar@{>->}[r]^-{1_{q(b)}\rt 1_{q(b)}}&q(b)}\big)\\
&=&Q\big(\xymatrix{
a\ar@{->>}[r]^-{e\lt 1_a}&1_b.1_b\ \ar@{>->}[r]^-{1_b\rt 1_b}&b}\big)
=Qi(e)
\end{eqnarray*}
and symmetrically, $i'Q_\cM(m)=Qi(m)$ for any morphism $m$ in $\cM$. Thus
$Q_\cE$ and $Q_\cM$ are restrictions of $Q$. Furthermore, for any morphism
$g=\big(\xymatrix{a\ar@{->>}[r]^-{e\lt 1_a}&1_b.1_b\ \ar@{>->}[r]^-{1_c \rt
m}&c}\big)$ in $\cM_\varrho \cE$, 
\begin{eqnarray*}
Q_\cE(\epsilon(g))
&=& Q_\cE(\xymatrix{a\ar@{->>}[r]^-{e\lt 1_a}& 1_b.1_b})
=\big(\xymatrix @C=40pt{q(a)\ar@{->>}[r]^-{Q_\cE(e)\lt 1_{q(a)}}&
1_{q(b)}.1_{q(b)}}\big)\\
&=&\epsilon'\big(\xymatrix @C=40pt{q(a)\ar@{->>}[r]^-{Q_\cE(e)\lt 1_{q(a)}}&
1_{q(b)}.1_{q(b)}\ \ar@{>->}[r]^-{1_{q(c)}\rt Q_\cM(m)}& q(c)}\big)
=\epsilon'(Q(g))
\end{eqnarray*}
and symmetrically, $Q_\cM(\mu(g))=\mu'(Q(g))$.

Iterating both functors above on the category of injective weak distributive
laws, we clearly obtain the identity functor. In the opposite order, a
morphism $(\cE \stackrel j \hookrightarrow \cC\stackrel j\hookleftarrow
\cM)\stackrel S\to  (\cE'\stackrel{j'}\hookrightarrow
\cC'\stackrel{j'}\hookleftarrow \cM')$ of bilinear factorization systems is
taken to $(\cE\stackrel i\hookrightarrow \cM_\varrho \cE\stackrel i
\hookleftarrow \cM)\stackrel Q\to (\cE'\stackrel{i'}\hookrightarrow 
\cM'_{\varrho'}\cE'\stackrel{i'} \hookleftarrow \cM')$, where the weak
distributive laws $\varrho$ and $\varrho'$ are of the form in \ref{as:fac>dl},
$\cM_\varrho\cE$ and $\cM'_{\varrho'}\cE'$ are the induced product categories
and the functors $i$ and $i'$ are as in Proposition
\ref{prop:functor_i}. The functor $Q$ is constructed from the restrictions
$S_\cE:\cE\to \cE'$ and $S_\cM:\cM\to \cM'$ of $S$ via \eqref{eq:Q}. The proof
of the theorem is completed by showing that the isomorphism $T$ in Theorem
\ref{thm:iso_T} induces a natural isomorphism between this composite functor
and the identity functor on the category of bilinear factorization
systems. The functor $T:\cM_\varrho\cE\to \cC$ (and hence also its inverse)
restricts to identity functors on $\cE$ and $\cM$. That is, for any morphism
$e:a\emor b$ in $\cE$,  
\begin{eqnarray*}
Ti(e)
&=& T\big(\xymatrix{a\ar@{->>}[r]^-{\epsilon j(e)}&F(1_b)\ 
\ar@{>->}[r]^-{\mu(1_b)} &b}\big)
=\big(\xymatrix{a\ar[r]^-{j\epsilon j(e)}&F(1_b)
\ar[r]^-{j\mu(1_b)} &b}\big)\\
&=& \big(\xymatrix{a \ar[r]^-{j(e)}&b\ar[r]^-{j\epsilon(1_b)}&F(1_b)
  \ar[r]^-{j\mu(1_b)}& b}\big)
=j(e),
\end{eqnarray*}
where the penultimate equality follows by the second identity in
\eqref{eq:e_bilin}. Symmetrically, $Ti(m)=j(m)$ for any morphism $m$ in
$\cM$. Moreover, for any morphism
$g=\big(\xymatrix{a\ar@{->>}[r]^-{\epsilon j(e)}&F(1_b)\ 
\ar@{>->}[r]^-{\mu j(m)}& c}\big)$ in $\cM_\varrho\cE$, 
\begin{eqnarray*}
\epsilon(T(g))
&=& \epsilon\big(\xymatrix @C=35pt{a\ar[r]^-{j\epsilon j(e)}&F(1_b)
\ar[r]^-{j\mu j(m)}& c}\big)
=\big(\xymatrix{a\ar@{->>}[r]^-{\epsilon j(e)}&F(1_b)}\big)
=\epsilon_{\cM_\varrho \cE}(g),
\end{eqnarray*}
where the penultimate equality follows by the bilinearity properties of
$\epsilon$; cf. \eqref{eq:eie}. Symmetrically, also
$\mu(T(g))=\mu_{\cM_\varrho \cE}(g)$. This proves that $T$ is an isomorphism
of bilinear factorization systems. 

Its naturality, i.e. the equality of functors $S T = T' Q$ is
immediate. 
\end{proof}

As a consequence of Proposition \ref{prop:retract} and Theorem
\ref{thm:fac_eq_wdl}, we obtain the following.

\begin{corollary} \label{cor:wdl_fac}
There is an adjunction whose counit is an isomorphism, between the category of
weak distributive laws in $\SM$ and the category of bilinear factorization
systems. 
\end{corollary}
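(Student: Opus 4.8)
The plan is to obtain the claimed adjunction by simply pasting together the two results just established: the reflection of Proposition \ref{prop:retract} of all weak distributive laws onto the injective ones, followed by the equivalence of Theorem \ref{thm:fac_eq_wdl} between injective weak distributive laws and bilinear factorization systems. First I would fix notation. Write $J$ for the inclusion of the category of injective weak distributive laws into the category of all weak distributive laws, and $S$ for its left adjoint constructed in Proposition \ref{prop:retract}, so that $S\dashv J$ with counit $\varepsilon:SJ\to 1$ invertible. Write $\Phi$ for the functor from injective weak distributive laws to bilinear factorization systems whose object map is described in \ref{as:dl>fac}, and $\Psi$ for the functor in the reverse direction whose object map is described in \ref{as:fac>dl}. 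By Theorem \ref{thm:fac_eq_wdl} these are mutually inverse equivalences; its proof in fact exhibits $\Psi\Phi$ as the identity functor on injective weak distributive laws and, through the isomorphism $T$ of Theorem \ref{thm:iso_T}, a natural isomorphism $\Phi\Psi\cong 1$. Since any equivalence is adjoint in both directions, I may take this to be an adjoint equivalence $\Phi\dashv\Psi$ whose unit and counit are both isomorphisms.

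Next I would form the composite functors $\Phi S$, from weak distributive laws to bilinear factorization systems, and $J\Psi$ in the reverse direction, and invoke the standard fact that composing the adjunctions $S\dashv J$ and $\Phi\dashv\Psi$ produces an adjunction $\Phi S\dashv J\Psi$. The entire content of the corollary is then the assertion that the counit of this composite adjunction is an isomorphism. By the usual formula for the counit of a composite of adjunctions, its component at a bilinear factorization system $\mathcal B$ is the composite
$$
\Phi S J \Psi\, \mathcal B \xrightarrow{\Phi\varepsilon_{\Psi\mathcal B}} \Phi\Psi\, \mathcal B \xrightarrow{\overline\varepsilon_{\mathcal B}} \mathcal B,
$$
where $\overline\varepsilon$ denotes the (invertible) counit of $\Phi\dashv\Psi$.

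It remains to check that this component is invertible, which is where the hypothesis that $\varepsilon$ is an isomorphism is used. The arrow $\Phi\varepsilon_{\Psi\mathcal B}$ is invertible because $\varepsilon$ is invertible and functors preserve isomorphisms, while $\overline\varepsilon_{\mathcal B}$ is invertible as the counit of an adjoint equivalence; hence their composite is invertible, as required. Abstractly, this is just the general principle that the composite of an adjunction with invertible counit and an adjoint equivalence again has invertible counit.

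The argument is essentially formal, so I do not expect a genuine obstacle; the only point that needs care is the \emph{handedness} of the equivalence. One must orient $\Phi\dashv\Psi$ so that $\Phi$ (rather than $\Psi$) is the left adjoint, ensuring that the category of injective weak distributive laws sits in the middle of the composite on the correct side of the reflector $S$; this is legitimate precisely because Theorem \ref{thm:fac_eq_wdl} yields an equivalence, which is adjoint in either direction. Once this bookkeeping is arranged, the composite adjunction $\Phi S\dashv J\Psi$ together with the computation above delivers the statement immediately.
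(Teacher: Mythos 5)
Your proposal is correct and is exactly the paper's argument: the paper derives this corollary in one line by composing the reflection of Proposition \ref{prop:retract} with the equivalence of Theorem \ref{thm:fac_eq_wdl}, which is precisely your $\Phi S\dashv J\Psi$ construction. You merely spell out the formal details (the composite-counit formula and the handedness of the adjoint equivalence) that the paper leaves implicit, and these details are right.
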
 

%%%%%%%%%%%%%%%%%%%%%%%%%%%%  SEC 3 %%%%%%%%%%%%%%%%%%%%%%%%%%%%%%%%%%%%%%%

\section{Strictly associative pseudoalgebras for the 2-monad $\ad2$ on $\Cat$}

\begin{aussage}{\bf The 2-monad $\ad2$ on $\Cat$.}
Recall (e.g. from Section 1 of \cite{KorTho}) that the 2-functor $\ad2$ on the
2-category $\Cat$ of categories; functors; and natural transformations, is
given as follows.  
For any category $\cC$, the objects of $\cC^2$ are morphisms in
$\cC$. Morphisms from $f:a\to x$ to $g:b\to y$ are pairs of morphisms $u:a\to
b$ and $v:x \to y$ such that the first diagram in
\begin{equation}\label{eq:K^1_mor}
\xymatrix{
a\ar[r]^-{u}\ar[d]_-{f}& b\ar[d]^-{g}\\
x\ar[r]_-{v}&y}
\qquad\qquad
\raise-.6cm\hbox{$F(u,v)=$}
\xymatrix{
F(a)\ar[r]^-{F(u)}\ar[d]_-{F(f)}&F(b)\ar[d]^-{F(g)}\\
F(x)\ar[r]_-{F(v)}&F(y)}
\qquad\qquad
\xymatrix{
F(a)\ar[r]^-{\omega_a}\ar[d]_-{F(f)}&G(a)\ar[d]^-{G(f)}\\
F(x)\ar[r]_-{\omega_x}&G(x)}
\end{equation}
commutes. For a functor $F:\cC \to \cD$, $F^2:\cC^2\to \cD^2$  takes the
morphism in the first figure to the morphism in $\cD^2$ in the second figure
of \eqref{eq:K^1_mor}.  
For a natural transformation $\omega:F \to G$ and an object $f:a\to x$ of
$\cC^2$ (i.e. a morphism $f$ in $\cC$), $\omega^2_f$ is the morphism in
$\cD^2$ depicted in the third figure of \eqref{eq:K^1_mor}. 

Multiplication of the 2-monad $\ad2$ is given at any category $\cC$ by the
functor $M_\cC:\cC^{2\x 2}\to \cC^2$ with the morphism map
\begin{equation}\label{eq:ad2_mp}
\xymatrix @R=2pt @C=20pt{
&&&a'\ar[rr]^-{u'}\ar[dd]_-{f'}&&b'\ar[dd]^-{g'}\\ 
&&&&&\\
&&&x'\ar[rr]_-{v'}&&y'\\
a\ar[rr]^-{u}\ar[dd]_-{f}\ar[rrruuu]^-{k}&&
b\ar[dd]^-{g}\ar[rrruuu]^-{l}&&&\\ 
&&&&&\\
x\ar[rr]_-{v} \ar[rrruuu]^(.7){m}|(.62){\color{white} III}&&
y\ar[rrruuu]_(.7){n}&&&}
\quad \raise-1cm\hbox{$\mapsto$} \quad
\xymatrix @=55pt{
a\ar[r]^-{k}\ar[d]_-{vf=gu}&
a'\ar[d]^-{v'f'=g'u'}\\
y\ar[r]_-{n}&y'}
\end{equation}
and the unit is given by the functor $I_\cC:\cC\to \cC^2$ with the morphism
map 
$$
 \raise-.6cm\hbox{$
\xymatrix{a\ar[r]^-f&x}$}
\quad \raise-.6cm\hbox{$\mapsto$} \quad
\xymatrix{
a\ar[r]^-{f}\ar[d]_-{1_a}&x\ar[d]^-{1_x}\\
a\ar[r]_-f&x\ .}
$$
\end{aussage}

\begin{aussage} {\bf (Strictly associative) pseudoalgebras of the 2-monad
    $\ad2$.}  \label{as:psalg}
Applying the definition of a pseudoalgebra (see e.g. \cite{Kelly_lax_alg}) to
the particular 2-monad $\ad2$, the following notion is obtained. It is given
by a category $\cC$, a functor $F:\cC^2\to \cC$ and natural isomorphisms
$\vartheta: FI_\cC\to 1_\cC$ and $\varphi:FM_\cC\to FF^2$ such that the
following diagrams (of natural transformations) commute.
$$
\xymatrix @=5pt{
F\ar@{=}[r]\ar@{=}[d]\ar@{=}[rrrrdddd]&
FM_{\cC} I_{\cC}^2  \ar[rrr]^-{\varphi I_{\cC}^2}&&&
FF^2I_{\cC}^2\ar@{=}[d]\\
FM_\cC I_{\cC^2}\ar[ddd]_-{\varphi I_{\cC^2}}&&&&
F(FI_{\cC})^2\ar[ddd]^-{F\vartheta^2}\\
&&&&\\
&&&&\\
FF^2I_{\cC^2}\ar@{=}[r]&FI_{\cC} F\ar[rrr]_-{\vartheta F}&&&F
}\qquad
\xymatrix @R=5.4pt @C=2pt{
FM_\cC M_\cC^2\ar[rr]^-{\varphi M_{\cC}^2}\ar@{=}[d]&&
FF^2 M_{\cC}^2  \ar@{=}[r]&
F(FM_{\cC})^2\ar[dd]^-{F\varphi^2}\\
FM_\cC M_{\cC^2}\ar[dd]_-{\varphi M_{\cC^2}}&&&\\
&&&F(FF^2)^2\ar@{=}[d]\\
FF^2M_{\cC^2}\ar@{=}[r]&FM_{\cC} F^{2\x 2}\ar[rr]_-{\varphi F^{2\x 2}}&&
FF^2F^{2\x 2}
}
$$
We say that a pseudoalgebra $(\cC,F,\vartheta,\varphi)$ is strictly
associative whenever $\varphi$ is the identity -- hence the natural
isomorphism $\vartheta:FI_\cC\to 1_\cC$ satisfies 
\begin{equation}\label{eq:associ_psalg}
F \vartheta^2=1_F =\vartheta F.
\end{equation}
Strict associativity of $F$ explicitly means the equality 
\begin{equation}\label{eq:F_associ}
\raise-.7cm\hbox{$F\left( \right .$}
\xymatrix{
F(f)\ar[r]^-{F(k,m)}\ar[d]_-{F(u,v)}&
F(f')\ar[d]^-{F(u',v')}\\
F(g)\ar[r]_-{F(l,n)}&
F(g')}
\raise-.7cm\hbox{$\left . \right)=F\left(\right .$}
\xymatrix{
a\ar[d]_-{gu}\ar[r]^-{k}&
a'\ar[d]^-{g'u'}\\
y\ar[r]_-{n}&
y'}
\raise-.7cm\hbox{$\left .\right)$}
\end{equation}
of morphisms in $\cC$, for any morphism in $\cC^{2\x 2}$ as on the left hand
side of \eqref{eq:ad2_mp}.

As explained in Section 2.2 of \cite{KorTho}, any pseudoalgebra
$(\cC,F,\vartheta,\varphi)$ of $\ad2$ can be modified such that it becomes
strictly unital, i.e. such that the natural isomorphism $\vartheta:FI_\cC\to
1_\cC$ becomes the identity. This modification certainly affects the
associativity natural isomorphism $\varphi:FM_\cC\to FF^2$ as well. 
Our functorial constructions later in this section, however, lead to strictly
associative but not necessarily strictly unital pseudoalgebras. We prefer not
to modify them to be strictly unital (spoiling perhaps their strict
associativity). As we will see, in our approach the unitality natural
isomorphism $\vartheta$ arises in a natural way.    
\end{aussage}

\begin{aussage}{\bf Any bilinear factorization system determines a strictly
associative pseudoalgebra of the 2-monad $\ad2$.} \label{as:fac>psalg}
This construction extends that in \cite{RosWoo}. If $\cE\hookrightarrow \cC
\hookleftarrow \cM$ is a bilinear factorization system, then applying the
notations in \ref{as:ds_fact}, for any morphism in $\cC^2$ as in the first
diagram in \eqref{eq:K^1_mor}, we can draw a fully commutative diagram in
$\cC$:
$$
\xymatrix @C=45pt{
a \ar[r]^-{i\epsilon(u)}\ar[d]_-{i\epsilon(f)}&
F(u)\ar[r]^-{i\mu(u)}\ar[d]^-{i\epsilon(i\epsilon(g)\,i\mu(u))}&
b\ar[d]^-{i\epsilon(g)}\\
F(f)\ar[r]^-{i\epsilon(i\epsilon(v)\,i\mu(f))}\ar[d]_-{i\mu(f)}&
F(vf)=F(gu)\ar[r]^-{i\mu(i\epsilon(g)\,i\mu(u))}
\ar[d]^-{i\mu(i\epsilon(v)\,i\mu(f))}&
F(g)\ar[d]^-{i\mu(g)}\\
x\ar[r]_-{i\epsilon(v)}&
F(v)\ar[r]_-{i\mu(v)}&
y}
$$
Indeed, the upper right square commutes since the path down-then-right
provides a factorization of the path right-then-down. By the same reasoning,
also the bottom left square is commutative. 
Combining the factorization property with the first equality in
\eqref{eq:e_bilin} and with the second equality in \eqref{eq:m_bilin},
respectively, we conclude that for any morphisms $f:a\to b$ and $g:b\to c$ in
$\cC$, 
\begin{equation}\label{eq:cond}
\epsilon(gf)=\epsilon(i\mu(g)\, i\epsilon(g)\, f)=
\epsilon(i\epsilon(g)\,f);\quad
\mu(gf)=\mu(g\,i\mu(f)\,i\epsilon(f))=\mu(g\, i\mu(f)).
\end{equation}
Applying this together with the second equality in \eqref{eq:e_bilin}, we see
that the right-then-down path in the top left square is equal to 
$$
i\epsilon\big(i\epsilon(g)\,i\mu(u)\big)i\epsilon(u)=
i\epsilon\big(g\, i\mu(u)\,i\epsilon(u)\big)=
i\epsilon(gu)
$$
while the down-then-right path is equal to 
$$
i\epsilon\big(i\epsilon(v)\,i\mu(f)\big)i\epsilon(f)=
i\epsilon\big(v\, i\mu(f)\,i\epsilon(f)\big)=
i\epsilon(vf).
$$
Hence also the top left square commutes and symmetrically so does the bottom
right square.  

With this information at hand, we define a functor $F:\cC^2\to \cC$ with the
morphism map
$$
\xymatrix{
a\ar[r]^-u\ar[d]_-f&b\ar[d]^-g\\x\ar[r]_-v&y}
\raise-.6cm\hbox{$\quad \mapsto \quad 
\xymatrix{
F(f)\ar[rr]^-{i\epsilon(i\epsilon(v)\,i\mu(f))}&&
F(vf)=F(gu)\ar[rr]^-{i\mu(i\epsilon(g)\,i\mu(u))}&&F(g).}$}
$$
In order to see that $F$ preserves identity morphisms, use \eqref{eq:cond};
\eqref{eq:e_bilin} and \eqref{eq:m_bilin}; and the factorization property to
deduce, for any morphism $f:a\to x$ in $\cC$, 
$$
F(1_f)=
i\mu(i\epsilon(f)i\mu(1_a))i\epsilon(i\epsilon(1_x)i\mu(f))=
i\mu i\epsilon(f) i\epsilon i\mu(f)=
i\mu(1_{F(f)})i\epsilon(1_{F(f)})=1_{F(f)}.
$$
For any commutative diagram 
$$
\xymatrix{
\ar[r]^-u\ar[d]_-f&\ar[r]^-w\ar[d]^-g&\ar[d]^-h\\\ar[r]_-v&\ar[r]_-t&}
$$
in $\cC$, it follows by \eqref{eq:cond} that
$$
F(wu,tv)=i\mu(i\epsilon(h)\,wu)\, i\epsilon(tv\, i\mu(f)).
$$
On the other hand, by applying \eqref{eq:cond} (in the first equality) and the
factorization property (in the second equality), we obtain 
\begin{eqnarray*}
F(w,t)F(u,v)
&=& i\mu(i\epsilon(h)\,i\mu(w))\, i\epsilon(t\, i\mu(g)) \,
i\mu(i\epsilon(g)\, u)\, i\epsilon(i\epsilon(v)\, i\mu(f))\\
&=&  i\mu(i\epsilon(h)\, \,i\mu(w))\
i\mu\big(i\epsilon(t\, i\mu(g))\, i\mu(i\epsilon(g)\, u) \big)\cdot \\
&&i\epsilon \big(i\epsilon(t\, i\mu(g))\, i\mu(i\epsilon(g)\, u) \big)\ 
i\epsilon(i\epsilon(v)\, i\mu(f)).
\end{eqnarray*}
In order to compare these expressions, note that by \eqref{eq:cond}; the
second condition in \eqref{eq:e_bilin} and the factorization property; and
since $tg=hw$,
$$
i\mu\big(i\epsilon(t\, i\mu(g))\, i\mu(i\epsilon(g)\, u) \big)=
i\mu\big(i\epsilon(t\, i\mu(g))\, i\epsilon(g)\, u \big)=
i\mu\big(i\epsilon(t\, g)\, u \big)=
i\mu\big(i\epsilon(h\, w)\, u \big).
$$
By the factorization property and the first condition in \eqref{eq:m_bilin};
by \eqref{eq:cond}; and by the second condition in 
\eqref{eq:e_bilin} and the factorization property, 
\begin{eqnarray*}
i\mu(i\epsilon(h)wu)
&=&i\mu(i\epsilon(h)\, \,i\mu(w))\
i\mu\big(i\epsilon(i\epsilon(h)\, i\mu(w))\, i\epsilon(w)\, u \big)\\
&=&i\mu(i\epsilon(h)\, \,i\mu(w))\
i\mu\big(i\epsilon(h\, i\mu(w))\, i\epsilon(w)\, u \big)\\
&=&i\mu(i\epsilon(h)\, \,i\mu(w))\ i\mu\big(i\epsilon(h\, w)\, u \big).
\end{eqnarray*}
This shows that $i\mu(i\epsilon(h)\, \,i\mu(w))\ 
i\mu\big(i\epsilon(t\, i\mu(g))\, i\mu(i\epsilon(g)\, u) \big)=
i\mu(i\epsilon(h)wu)$. 
By symmetrical considerations, $i\epsilon \big(i\epsilon(t\, i\mu(g))\,
i\mu(i\epsilon(g)\, u) \big)\ i\epsilon(i\epsilon(v)\, i\mu(f))=i\epsilon(tv\,
i\mu(f))$ -- proving that $F$ preserves composition as well.

In order to see strict associativity of $F$, consider the morphism in
$\cC^{2\x 2}$ on the left hand side of \eqref{eq:ad2_mp}. The functor $FM_\cC$
takes it to
$$
i\mu\big(i\epsilon(g'u')\, i\mu(k)\big)\ 
i\epsilon\big(i\epsilon(n)\, i\mu(gu)\big)
$$
while $FF^2$ takes it to 
$$
i\mu\big(i\epsilon(i\epsilon(v')\, i\mu(f'))\, 
i\mu(i\epsilon(f')\, i\mu(k))\big)  
i\epsilon\big(i\epsilon(i\epsilon(n)\, i\mu(g))\, 
i\mu(i\epsilon(g)\, i\mu(u))\big).
$$
These are equal morphisms $F(gu)\to F(g'u')$ in $\cC$ since by \eqref{eq:cond};
\eqref{eq:e_bilin} and the factorization property,
\begin{eqnarray*}
i\mu\big(i\epsilon(i\epsilon(v')\, i\mu(f'))\, 
i\mu(i\epsilon(f')\, i\mu(k))\big)  &=&
i\mu\big(i\epsilon(v'\, i\mu(f'))\, i\epsilon(f')\, k\big)\\
&=&
i\mu\big(i\epsilon(v'f')\, k \big) =
i\mu\big(i\epsilon(g'u')\, k \big),
\end{eqnarray*}
and symmetrically, $i\epsilon\big(i\epsilon(i\epsilon(n)\, i\mu(g))\, 
i\mu(i\epsilon(g)\, i\mu(u))\big)= i\epsilon\big(i\epsilon(n)\,
i\mu(gu)\big)$. 

The coherence natural isomorphism $\vartheta:FI_\cC\to 1_\cC$ is given by the
isomorphism $\vartheta_x=i\mu(1_x)$, with the inverse $\vartheta^{-1}_x=
i\epsilon(1_x)$, for any object $x$ of $\cC$. Indeed, 
$
\vartheta_x \vartheta_x^{-1} = i\mu(1_x) i\epsilon(1_x) =1_x
$ 
by the factorization property; and combining the factorization property with
\eqref{eq:cond}, \eqref{eq:e_bilin} and \eqref{eq:m_bilin}, also
\begin{eqnarray*}
\vartheta_x^{-1}\vartheta_x&=&
i\mu\big(i\epsilon(1_x) i\mu(1_x)\big) 
i\epsilon\big(i\epsilon(1_x) i\mu(1_x)\big)\\
&=&i\mu i\epsilon(1_x)i\epsilon i\mu(1_x)=
i\mu(1_{F(1_x)})i\epsilon(1_{F(1_x)})=
1_{F(1_x)}.
\end{eqnarray*}
Naturality of $\vartheta$ follows by the equality of
\begin{eqnarray*}
\vartheta_x FI_\cC(f)&=&
i\mu(1_x)i\mu\big(i\epsilon(1_x)i\mu(f)\big)
i\epsilon\big(i\epsilon(f)i\mu(1_a)\big) =
i\mu(f)i\epsilon(fi\mu(1_a))
\qquad\textrm{and}\\
f\vartheta_a&=&
fi\mu(1_a)=
i\mu\big(f\, i\mu(1_a)\big)
i\epsilon\big(f\, i\mu(1_a)\big)= 
i\mu(f)i\epsilon(fi\mu(1_a)),
\end{eqnarray*}
for any morphism $f:a\to x$ in $\cC$. Thus
$FI_\cC(f)=i\epsilon(1_x)fi\mu(1_a)$. Finally, $\vartheta$ obeys the coherence
conditions \eqref{eq:associ_psalg} since by the bilinearity conditions
\eqref{eq:e_bilin} and \eqref{eq:m_bilin}, $F(f)= F(i\mu(f)1_{F(f)}
i\epsilon(f))=F(1_{F(f)})$ and thus   
\begin{eqnarray*}
\vartheta_{F(f)}&=&
i\mu(1_{F(f)})=
i\mu i\mu(1_{F(f)}) i\epsilon i\mu(1_{F(f)})=
i\mu(1_{F(f)}) i\epsilon(1_{F(f)})=
1_{F(f)};\\
F(\vartheta^2_f)&=&
i\mu\big(i\epsilon(f)i\mu i\mu (1_a)\big)
i\epsilon\big(i\epsilon i\mu(1_x) i\mu(i\epsilon(1_x)f i\mu(1_a))\big)=
i\mu i\epsilon(f)\, i\epsilon i\mu(f) \\
&=&i\mu(1_{F(f)})\, i\epsilon(1_{F(f)})=1_{F(f)}.
\end{eqnarray*}
\end{aussage}

\begin{aussage}{\bf Any strictly associative pseudoalgebra of the 2-monad
    $\ad2$ determines a bilinear factorization system.} \label{as:psalg>fac} 
This construction, again, extends that in \cite{RosWoo}. Consider a strictly
associative pseudoalgebra $(\cC,F)$ of the 2-monad $\ad2$, with coherence
natural isomorphism $\vartheta:FI_\cC\to 1_\cC$. For any morphism $f:a\to x$
in $\cC$, the morphism $I_\cC (f)$ in $\cC^2$ has a factorization
$$
\raise-.6cm\hbox{$I_\cC (f)=$}
\xymatrix{a\ar[r]^-f\ar[d]_-{1_a}&x\ar[d]^-{1_x}\\a\ar[r]_-f&x}
 \raise-.6cm\hbox{$=$}
\xymatrix{a\ar[r]^-{1_a}\ar[d]_-{1_a}&a\ar[d]^-f\ar[r]^-f& x\ar[d]^-{1_x}\\
a\ar[r]_-f&x\ar[r]_-{1_x}&x}
$$
Applying $F$ to this equality, we can write $f$ in the equal form
\begin{equation}\label{eq:factor}
f=
\vartheta_x FI_\cC(f)\vartheta_a^{-1}=
\big(\xymatrix{
a\ar[r]^-{\vartheta_a^{-1}}&FI_\cC(a)\ar[r]^-{F(1_a,f)}&F(f)\ar[r]^-{F(f,1_x)}&
FI_\cC(x)\ar[r]^-{\vartheta_x}& x}\big).
\end{equation}
Using this decomposition of $f$, we construct two subcategories $\cE$ and
$\cM$ of $\cC$ as follows. Objects in both subcategories coincide with the
objects in $\cC$. Morphisms $a\to x$ in $\cE$ are those morphisms $e$ in $\cC$
for which $F(e)=FI_\cC(x)$ and $F(e,1_x)=1_{FI_\cC(x)}$. That is, morphisms of
the form 
\begin{equation}\label{eq:e_form} 
e=\big(
\xymatrix{
a\ar[r]^-{\vartheta_a^{-1}}&FI_\cC(a)\ar[r]^-{F(1_a,f)}&F(f)=FI_\cC(x)
\ar[r]^-{\vartheta_x}&x}
\big), 
\end{equation}
where $f:a\to x$ is any morphism in $\cC$ such that $F(f)=FI_\cC(x)$.
Indeed, for $e$ as in \eqref{eq:e_form}, 
$FI_\cC(e)=\big(\xymatrix{
F(1_a)\ar[r]^-{F(1_a,f)}&F(f)}\big)$. Hence $F(e,1_x)$ is equal to 
$$
\raise-.7cm\hbox{$F\left( \right .$}
\xymatrix @R=17pt @C=24pt{
a\ar[r]^-{\vartheta_a^{-1}}\ar[d]^(.3){e}&
F(1_a)\ar[r]^-{F(1_a,f)}\ar[d]^(.3){F(1_a,f)}&
F(1_x)\ar[r]^-{\vartheta_x}\ar[d]_(.7){1_{F(1_x)}}&
x\ar[d]_(.7){1_x}\\
x\ar[r]_-{\vartheta_x^{-1}}&
F(f)\ar@{=}[r]&
F(1_x)\ar[r]_-{\vartheta_x}&
x
}
\raise-.7cm\hbox{$\left . \right )= F\left ( \right .$}
\xymatrix @R=17pt @C=19pt{
F(1_a)\ar[r]^-{F(1_a,f)}\ar[d]^(.3){F(1_a,f)}&
F(f)\ar[d]_(.7){F(1_a,1_x)}\\
F(f)\ar[r]_-{F(1_a,1_x)}&
F(f)}
\raise-.7cm\hbox{$\left . \right )=F($}
\xymatrix{
a\ar[r]^-{1_a}\ar[d]^(.3){f}&
a\ar[d]_(.7){f}\\
x\ar[r]_-{1_x}&
x}
 \raise-.7cm\hbox{$\left . \right ),$}
$$
i.e. to $1_{F(f)}=1_{FI_\cC(x)}$. The first equality follows by the coherence
condition $F\vartheta^2=1_F$ and the second one follows by the associativity
of $F$ cf. \eqref{eq:F_associ}.
Symmetrically, the morphisms $a\to x$ in $\cM$ are those morphisms $m$ in
$\cC$ for which $F(m)=FI_\cC(a)$ and $F(1_a,m)=1_{FI_\cC(a)}$. That is, they
are of the form 
$$
\xymatrix{
a\ar[r]^-{\vartheta_a^{-1}}&FI_\cC(a)=F(f)\ar[r]^-{F(f,1_x)}&FI_\cC(x)
\ar[r]^-{\vartheta_x}&x},
$$
for morphisms $f:a\to x$ in $\cC$ such that $F(f)=FI_\cC(a)$.
The identity morphisms are clearly contained both in $\cE$ and $\cM$; we need
to check that they are closed under composition. For that the following lemma
will be needed.
\begin{lemma*} 
For any morphisms $f:y\to x$ in $\cC$ and $m:x\to z$ in $\cM$, the following
hold.
\begin{itemize}
\item[{(a)}] $F(mf)=F(f)$;
\item[{(b)}] $\big(\xymatrix{F(f)\ar[r]^-{F(1_y,m)}&F(mf)}\big)=1_{F(f)}$;
\item[{(c)}] $\big(\xymatrix{F(mf)\ar[r]^-{F(f,1_z)}&F(m)}\big)=
\big(\xymatrix{F(f)\ar[r]^-{F(f,1_x)}&F(1_x)}\big)$.
\end{itemize}
Symmetrically, for any morphisms $e:x\to y$ in $\cE$ and $f:y\to z$ in $\cC$,
the following hold.
\begin{itemize}
\item[{(a')}] $F(fe)=F(f)$;
\item[{(b')}] $\big(\xymatrix{F(fe)\ar[r]^-{F(e,1_z)}&F(f)}\big)=1_{F(f)}$;
\item[{(c')}] $\big(\xymatrix{F(e)\ar[r]^-{F(1_x,f)}&F(fe)}\big)=
\big(\xymatrix{F(1_y)\ar[r]^-{F(1_y,f)}&F(f)}\big)$.
\end{itemize}
\end{lemma*}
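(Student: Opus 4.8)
The plan is to run everything off strict associativity \eqref{eq:F_associ} of $F$, i.e. the on-the-nose equality $FM_\cC=FF^2$ of \eqref{eq:ad2_mp}, together with the two defining relations of a morphism of $\cM$. First I would record the consequences I need. Applying \eqref{eq:F_associ} to the morphism of $\cC^{2\times 2}$ whose connecting $2$-cells $k,l,m,n$ are all identities makes both sides identity morphisms, and comparing their (co)domains yields the object identity $F(F(u,v))=F(gu)=F(vf)$ for every commutative square $(u,v)\colon f\to g$ in $\cC$; for a general cube the same equation reads $F(F(k,m),F(l,n))=F(k,n)$. I also record that a morphism $m\colon x\to z$ of $\cM$ satisfies $F(m)=F(1_x)$ and $F(1_x,m)=1_{F(1_x)}$, and that in $\cC^2$ one has the factorizations $(f,m)=(1_x,m)\circ(f,1_x)$ and $(f,m)=(f,1_z)\circ(1_y,m)$. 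Finally, by the symmetry of the construction under passage to $\cC^{\mathrm{op}}$ (which interchanges domains with codomains and $\cM$ with $\cE$), it suffices to prove (a), (b), (c); the primed statements then follow verbatim from the $\cE$-relations $F(e)=F(1_y)$, $F(e,1_y)=1_{F(1_y)}$.

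For (a), I would compute $F(f,m)$ in two ways. Functoriality of $F$ with the first factorization and the $\cM$-relation give $F(f,m)=F(1_x,m)\circ F(f,1_x)=F(f,1_x)$. Feeding the squares $(f,m)\colon f\to m$ and $(f,1_x)\colon f\to 1_x$ into the object identity above yields $F(F(f,m))=F(mf)$ and $F(F(f,1_x))=F(f)$; since $F(f,m)=F(f,1_x)$ as objects of $\cC^2$, I conclude $F(mf)=F(f)$.

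The crux is (b). I would apply \eqref{eq:F_associ} to the specific cube whose source square is $I_\cC(f)=(f,f)\colon 1_y\to 1_x$, whose target square is $(f,mf)\colon 1_y\to m$, and whose connecting morphism is the pair $\big((1_y,1_y),(1_x,m)\big)$ — the identity of $1_y$ together with the distinguished $\cM$-morphism $(1_x,m)\colon 1_x\to m$. On the $M_\cC$ side this cube produces $F(1_y,m)\colon F(f)\to F(mf)$. On the $F^2$ side it produces $F\big(1_{F(1_y)},F(1_x,m)\big)=F\big(1_{F(1_y)},1_{F(1_x)}\big)$: here the $\cM$-relation is used once to replace $F(1_x,m)$ by $1_{F(1_x)}$, and a second time — via $F(f,mf)=F(1_x,m)\circ F(f,f)=F(f,f)$ — to see that the source and target objects of this $\cC^2$-morphism, namely $F(f,f)$ and $F(f,mf)$, coincide, so that it is the identity of the object $F(f,f)$. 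Its image under $F$ is then $1_{F(F(f,f))}=1_{F(f)}$ by the object identity. Equating the two sides gives $F(1_y,m)=1_{F(f)}$ (and, comparing codomains, re-derives $F(mf)=F(f)$).

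Finally (c) is a short consequence: from the second factorization, functoriality gives $F(f,m)=F(f,1_z)\circ F(1_y,m)$, which by (b) equals $F(f,1_z)$; combined with $F(f,m)=F(f,1_x)$ from the proof of (a), this yields $F(f,1_z)=F(f,1_x)$, which is exactly (c). I expect the main obstacle to be the construction of the cube in (b) and the verification that its $F^2$-side genuinely collapses to an identity morphism: this is the one place where the relation $F(1_x,m)=1_{F(1_x)}$ defining $\cM$ must be invoked twice, and checking that the chosen connecting $2$-cells satisfy the compatibility conditions implicit in \eqref{eq:ad2_mp} requires care.
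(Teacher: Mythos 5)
Your proposal is correct and takes essentially the same route as the paper: the cube you construct for (b) — source square $(f,f)\colon 1_y\to 1_x$, target square $(f,mf)\colon 1_y\to m$, connecting morphisms $(1_y,1_y)$ and $(1_x,m)$ — is exactly the one the paper feeds into strict associativity, and it likewise yields (a) and (b) simultaneously. Your derivations of (a) and (c) via functoriality of $F$ applied to the two factorizations of $(f,m)$ in $\cC^2$ are only cosmetic repackagings of the paper's argument: the object-level identity $F(F(u,v))=F(gu)$ is the object part of the same strict associativity, and the commutativity of the $F^2$-image of the paper's second cube is precisely the functoriality statement $F(f,1_z)F(1_y,m)=F(1_x,m)F(f,1_x)$ that you use.
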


\begin{proof}
Evaluating the associativity constraint
  \eqref{eq:F_associ} on the morphism in $\cC^{2\x2}$ depicted in the first
  figure below, we obtain the equality in the second figure.
$$
\xymatrix @R=2pt @C=10pt{
&&&y\ar[rr]^-{f}\ar[dd]_-{1_y}&&x\ar[dd]^-{m}\\ 
&&&&&\\
&&&y\ar[rr]_(.3){mf}&&z\\
y\ar[rr]^-{f}\ar[dd]_-{1_y}\ar[rrruuu]^-{1_y}&&
x\ar[dd]^-{1_x}\ar[rrruuu]^(.6){1_x}&&&\\ 
&&&&&\\
y\ar[rr]_-{f}\ar[rrruuu]^(.7){1_y}|(.63){\color{white} IIII}&&
x\ar[rrruuu]_-{m}&&&}
\qquad
\raise-.7cm\hbox{$F\left ( \right .$}
\xymatrix{
F(1_y)\ar[r]^-{F(1_y,1_y)}\ar[d]_{F(f,f)}&F(1_y)\ar[d]^-{F(f,mf)}\\
F(1_x)\ar[r]_-{F(1_x,m)}&F(m)}
\raise-.7cm\hbox{$\left .\right) = F\left ( \right .$}
\xymatrix @R=30pt{
y\ar[d]_-f \ar[r]^-{1_y}&y\ar[d]^-{mf}\\
x\ar[r]_-m&z}
\raise-.7cm\hbox{$\left . \right )$}
$$
Since $m$ is a morphism in $\cM$, on the left hand side of this equality the
bottom arrow is the identity arrow $F(1_x,1_x)$ and both vertical arrows
are equal. Hence the left hand side is equal to $F(1_{F(f,f)})=1_{F(f)}$,
proving assertions (a) and (b).  

Applying $F^2$ to the morphism in $\cC^{2\x 2}$ in the first figure below, we
obtain the morphism in $\cC^2$ in the second figure.
$$
\xymatrix @R=2pt @C=10pt{
&&&y\ar[rr]^-{f}\ar[dd]_-{mf}&&x\ar[dd]^-{m}\\ 
&&&&&\\
&&&z\ar[rr]_(.3){1_z}&&z\\
y\ar[rr]^-{f}\ar[dd]_-{f}\ar[rrruuu]^-{1_y}&&
x\ar[dd]^-{1_x}\ar[rrruuu]^(.6){1_x}&&&\\ 
&&&&&\\
x\ar[rr]_-{1_x}\ar[rrruuu]^(.7){m}|(.63){\color{white} IIII}&&
x\ar[rrruuu]_-{m}&&&}
\qquad \qquad
\xymatrix{
F(f)\ar[r]^-{F(1_y,m)}\ar[d]_-{F(f,1_x)}&
F(mf)\ar[d]^-{F(f,1_z)}\\
F(1_x)\ar[r]_-{F(1_x,m)}&
F(m)}
$$
In the second figure, the top arrow is an identity morphism by part (b) and
the bottom arrow is an identity morphism since $m$ belongs to $\cM$. Thus
commutativity of the square implies part (c).

The remaining assertions follow by symmetrical reasonings.
\end{proof}
For any morphisms $m:y\to x$ and $n:x\to z$ in $\cM$, we need to show that
$$
\big(\xymatrix @=30pt{F(1_y)\ar[r]^-{F(1_y,nm)}&F(nm)}\big)=
\big(\xymatrix{F(1_y)\ar[r]^-{F(1_y,m)}&F(m)\ar[r]^-{F(1_y,n)}&F(nm)}\big)
$$
is an identity morphism. This holds because
the first arrow on the right hand side is an identity morphism since $m$
belongs to $\cM$ and the second arrow is an identity morphism by part (b) of
Lemma. This proves that $\cM$ is closed under composition and symmetrically,
so is $\cE$.

We have to construct a section for the 2-cell $\cE\cM\hookrightarrow
\cC\cC\stackrel\circ \to \cC$ in $\SM$. This is done by putting, for any
morphism $f:a\to x$ in $\cC$,
$$
\epsilon(f)
:=\big(
\xymatrix{a\ar[r]^-{\vartheta_a^{-1}}&F(1_a)\ar[r]^-{F(1_a,f)}&F(f)}\big);
\qquad 
\mu(f)
:=\big(
\xymatrix{F(f)\ar[r]^-{F(f,1_x)}&F(1_x)\ar[r]^-{\vartheta_x}&x}\big).
$$
In view of \eqref{eq:factor}, it gives a factorization $f=i\mu(f)i\epsilon(f)$ 
of $f$ (where $i$ stands for the obvious inclusions $\cE\hookrightarrow
\cC\hookleftarrow \cM$). 
It follows by the coherence of $\vartheta$ and associativity of $F$ that
$F(i\epsilon(f),1_{F(f)})$ is equal to  
$$
\raise-.7cm\hbox{$F\left ( \right .$}
\xymatrix @C=30pt {
a\ar[r]^-{\vartheta_a^{-1}}\ar[d]
_-{i\epsilon(f)}&
F(1_a)\ar[d]
^-{F(1_a,f)}\ar[r]^-{F(1_a,f)}&
F(f)\ar[d]
^-{F(1_a,1_x)}\\
F(f)\ar[r]_-{\vartheta^{-1}_{F(f)}=1_{F(f)}}&
F(f)\ar[r]_-{F(1_a,1_x)}&
F(f)}
\raise-.7cm\hbox{$\left . \right )=F\left( \right .$}
\xymatrix @=33pt {a\ar[d]
_-f\ar[r]^-{1_a}&a\ar[d]
^-f\\x\ar[r]_-{1_x}&x}
\raise-.7cm\hbox{$\left . \right ),$}
$$
i.e. to $1_{F(f)}$; and in particular $F(i\epsilon(f))=F(f)=F(1_{F(f)})$. 
Hence $\epsilon(f)$ belongs to $\cE$ and symmetrically, $\mu(f)$ belongs to
$\cM$. 
It remains to check properties \eqref{eq:e_bilin} and
\eqref{eq:m_bilin}. Consider any morphisms $e:x\to a$ in $\cE$, $f:a\to b$ in
$\cC$ and $m:b\to z$ in $\cM$. Then by part (a') in Lemma, $F(fe)=F(f)$ and by
part (b'), 
$$
\mu(fi(e))=
\big(\xymatrix{
F(fe)\ar[rr]^-{F(e,1_b)=1_{F(f)}}&&
F(f)\ar[r]^-{F(f,1_b)}&
F(1_b)\ar[r]^-{\vartheta_b}&b}
\big)=\mu(f).
$$
Moreover, by part (a), $F(mf)=F(f)$ and by part (c), $\mu(i(m)f)$ is equal to  
$$
\big(\xymatrix{
F(mf)\ar[r]^-{F(f,1_z)}&
F(m)\ar[rr]^-{FI_\cC(m)=F(m,1_z)}&&
F(1_z)\ar[r]^-{\vartheta_z}&z}
\big)=\big(
\xymatrix{
F(f)\ar[r]^-{F(f,1_b)}&
F(1_b)\ar[r]^-{\vartheta_b}&
b\ar[r]^-m&z}
\big),
$$
i.e. to $m\mu(f)$. This proves that $\mu$ obeys \eqref{eq:m_bilin} and
symmetrically, $\epsilon$ is shown to obey \eqref{eq:e_bilin}.
\end{aussage}

Our next aim is to relate the constructions in \ref{as:fac>psalg} and
\ref{as:psalg>fac}.

\begin{aussage}{\bf Strict morphisms of (strictly associative) pseudoalgebras
    of the 2-monad $\ad2$.} 
Recall from \cite{Kelly_lax_alg} that a strict morphism
$(\cC,F,\vartheta,\varphi)\to (\cC',F',\vartheta',\varphi')$ 
of pseudoalgebras for the 2-monad $\ad2$ is a functor $Q:\cC\to \cC'$ such
that the following diagrams (of functors and of natural transformations,
respectively) commute.
$$
\xymatrix @R=32pt{
\cC^2\ar[r]^-{Q^2}\ar[d]_-{F}&\cC^{\prime 2}\ar[d]^-{F'}\\\cC\ar[r]_-Q&\cC
}\qquad \qquad
\xymatrix @R=6pt{
QFI_\cC\ar[r]^-{Q\vartheta}\ar@{=}[d]&Q\ar@{=}[dd]\\
F'Q^2I_\cC\ar@{=}[d]&\\
F'I_{\cC'}Q\ar[r]_-{\vartheta' Q}&Q
}\qquad \qquad
\xymatrix @R=6pt{
QFM_\cC\ar[r]^-{Q\varphi}\ar@{=}[d]&QFF^2\ar@{=}[d]\\
F'Q^2M_\cC\ar@{=}[d]&F'Q^2F^2\ar@{=}[d]\\
F'M_{\cC'}Q^{2\x 2}\ar[r]_-{\varphi'Q^{2\x 2}}&F'F^{\prime 2}Q^{2\x 2}
}
$$
If the pseudoalgebras $(\cC,F,\vartheta,\varphi)$ and
$(\cC',F',\vartheta',\varphi')$ are strictly associative then 
the last diagram becomes trivial. 

(Strictly associative) pseudoalgebras of $\ad2$ and their strict morphisms
constitute a category. 
\end{aussage}

\begin{theorem} \label{thm:fac_eq_psalg}
There is an adjunction with a trivial counit, between the
category of bilinear factorization systems and the category of strictly
associative pseudoalgebras of the 2-monad $\ad2$ on $\Cat$. 
\end{theorem}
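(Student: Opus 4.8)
The plan is to lift the object-level constructions of \ref{as:fac>psalg} and \ref{as:psalg>fac} to functors $\Phi$ and $\Psi$, and to prove that $\Phi$ (sending a bilinear factorization system to its associated strictly associative pseudoalgebra) is left adjoint to $\Psi$ (sending a strictly associative pseudoalgebra to its associated bilinear factorization system), with counit the identity. On morphisms both functors are carried by the same underlying functor $Q\colon\cC\to\cC'$. For $\Phi$ I would check that a morphism of bilinear factorization systems is a strict morphism of the associated pseudoalgebras: compatibility with the coherence isomorphisms is immediate from $\vartheta_x=i\mu(1_x)$ together with the preservation of $\mu$ by $Q$, while $QF=F'Q^2$ holds because the morphism map of $F$ in \ref{as:fac>psalg} is assembled entirely from the expressions $i\epsilon$ and $i\mu$, which $Q$ preserves; strict associativity renders the remaining coherence square vacuous. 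For $\Psi$ I would check that a strict morphism $Q$ carries the canonical subcategories $\cE,\cM$ of \ref{as:psalg>fac} into those of the target (since $Q$ preserves the defining equations $F(e)=FI_\cC(\cdot)$ and $F(e,1)=1$) and preserves $\epsilon,\mu$, which is immediate from $\epsilon(f)=F(1_a,f)\vartheta_a^{-1}$ and $\mu(f)=\vartheta_xF(f,1_x)$.

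Next I would establish that $\Phi\Psi$ is the identity functor on strictly associative pseudoalgebras, which supplies the trivial counit $\varepsilon\colon\Phi\Psi\to 1$. Starting from $(\cC,F,\vartheta)$, the reconstructed unit isomorphism is recovered on the nose, $\vartheta'_x=i\mu(1_x)=\vartheta_xF(1_x,1_x)=\vartheta_x$, since $(1_x,1_x)$ is the identity endomorphism of the object $1_x$ of $\cC^2$. The crucial point is that the reconstructed functor $F'$ equals $F$ on morphisms. For a morphism $(u,v)\colon f_1\to g_1$ of $\cC^2$ the recipe of \ref{as:fac>psalg} gives $F'(u,v)=i\mu\big(i\epsilon(g_1)\,i\mu(u)\big)\,i\epsilon\big(i\epsilon(v)\,i\mu(f_1)\big)$; substituting the pseudoalgebra formulas for $\epsilon,\mu$ and repeatedly invoking the strict associativity \eqref{eq:F_associ}, the coherence \eqref{eq:associ_psalg}, and parts (a)--(c) and (a$'$)--(c$'$) of the Lemma in \ref{as:psalg>fac}, this composite collapses to the original $F(u,v)$. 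As $\Phi\Psi$ also fixes the underlying functor of every morphism, $\Phi\Psi=1$ and we take $\varepsilon$ to be the identity.

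To complete the adjunction I would exhibit the unit $\eta\colon 1\to\Psi\Phi$. For a bilinear factorization system $B$ on $\cC$, the composite $\Psi\Phi B$ carries the very same $\epsilon,\mu$ (a routine verification from the definitions, parallel to the preceding paragraph) but replaces $\cE,\cM$ by the canonical closures $\cE_{\mathrm{can}}\supseteq i(\cE)$ and $\cM_{\mathrm{can}}\supseteq i(\cM)$; this inclusion is genuinely strict in general — for instance, in Example \ref{ex:non_triv} the morphisms $q$ and $f^{-1}$ lie in the canonical $\cE$ but not in the original one. I would take $\eta_B$ to be the identity functor of $\cC$, which is a morphism of bilinear factorization systems because it preserves $\epsilon,\mu$ and restricts along the inclusions $\cE\hookrightarrow\cE_{\mathrm{can}}$ and $\cM\hookrightarrow\cM_{\mathrm{can}}$; naturality in $B$ is immediate. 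Since $\varepsilon$ is the identity and $\eta$ is carried by identity functors, both triangle identities hold trivially, so $\Phi\dashv\Psi$ with trivial counit. Equivalently, the strictly associative pseudoalgebras form a reflective subcategory of the bilinear factorization systems, the reflector $\Phi$ enlarging $\cE,\cM$ to their canonical closures — which is precisely the passage, anticipated in the Introduction, from a bilinear factorization system to the orthogonal factorization system containing it.

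The main obstacle is the identity $F'=F$ of the second paragraph: that the pseudoalgebra multiplication reconstructed from its own extracted factorization data is literally the original one, not merely isomorphic to it. This is exactly where strict associativity \eqref{eq:F_associ} is indispensable — without it one could only expect a coherent isomorphism $F'\cong F$, hence a counit that is an isomorphism rather than an identity — and where the Lemma of \ref{as:psalg>fac} carries the computational weight.
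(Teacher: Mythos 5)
Your proposal is correct and follows essentially the same route as the paper: extend the two constructions to functors acting as the identity on underlying functors, verify that the composite (pseudoalgebra $\to$ factorization system $\to$ pseudoalgebra) is literally the identity via strict associativity (giving the trivial counit), and take the unit to be the identity functor of $\cC$, which enlarges $\cE,\cM$ to the canonical subcategories extracted from the reconstructed pseudoalgebra. The only cosmetic difference is that the paper's verification of $F'=F$ uses just strict associativity and the coherence of $\vartheta$ rather than the Lemma of \ref{as:psalg>fac}, and the paper leaves the functoriality checks you spell out as ``easy to see.''
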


\begin{proof}
It is easy to see that both constructions in \ref{as:fac>psalg} and
\ref{as:psalg>fac} can be extended to functors $L$ and $R$ between the stated
categories, both acting on the morphisms as identity maps. 

Let $(\cC,F,\vartheta,1)$ be a strictly associative pseudoalgebra of $\ad2$
and consider the associated bilinear factorization system $\cE\hookrightarrow 
\cC\hookleftarrow \cM$ in \ref{as:psalg>fac}. For any morphism $(u,v):f\to g$
in $\cC^2$ (cf. first figure in \eqref{eq:K^1_mor}), 
$$
i\epsilon(v)i\mu(f)=
\big(\xymatrix{
F(f)\ar[r]^-{F(f,1_x)}&F(1_x)\ar[r]^-{F(1_x,v)}&F(v)}\big)=
\big(\xymatrix{
F(f)\ar[r]^-{F(f,v)}&F(v).}\big)
$$
Hence $i\epsilon\big(i\epsilon(v)i\mu(f)\big)$ is equal to
$$
\raise-.7cm\hbox{$F \left ( \right .$}
\xymatrix{
F(f)\ar[r]^-{F(1_a,1_x)}\ar[d]_-{F(1_a,1_x)}&F(f)\ar[d]^-{F(f,v)}\\
F(f)\ar[r]_-{F(f,v)}&F(v)}
\raise-.7cm\hbox{$\left . \right )=F \left ( \right .$}
\xymatrix{
a\ar[r]^-{1_a}\ar[d]_-{f}&a\ar[d]^-{vf}\\x\ar[r]_-v&y}
\raise-.7cm\hbox{$\left . \right )$}
$$
by the associativity of $F$.
Symmetrically, $i\mu\big(i\epsilon(g)i\mu(u)\big)= F(u,1_y):F(gu)\to F(g)$,
hence  
$$
\big(\xymatrix{
F(f)\ar[rr]^-{i\epsilon\big(i\epsilon(v)i\mu(f)\big)}&&F(vf)=F(gu)
\ar[rr]^-{i\mu\big(i\epsilon(g)i\mu(u)\big)}&&
F(g)}\big)=
\big(\xymatrix{
F(f)\ar[r]^-{F(u,v)}&F(g)}\big).
$$
This proves that the composite $LR$ is the identity functor.

Consider now a bilinear factorization system $\cE\stackrel i \hookrightarrow
\cC\stackrel i \hookleftarrow \cM$ with structure functions
$\epsilon:\mathrm{Mor}(\cC)\to \mathrm{Mor}(\cE)$ and $\mu:\mathrm{Mor}(\cC)
\to \mathrm{Mor}(\cM)$; and let $(\cC,F,\vartheta,1)$ be the associated
pseudoalgebra in \ref{as:fac>psalg}. For any morphism $f:a\to x$ in $\cC$, 
$$
F(1_a,f)\vartheta_a^{-1}
= i\mu\big(i\epsilon(f)\, i\mu(1_a)\big) \ 
i\epsilon\big(i\epsilon(f)\, i\mu(1_a)\big) \ i\epsilon(1_a)
=i\epsilon(f)\, i\mu(1_a)\, i\epsilon(1_a)
= i\epsilon(f).
$$
Symmetrically, $\vartheta_xF(f,1_x)=i\mu(f)$, hence $RL$ takes the
bilinear factorization system $\cE\hookrightarrow \cC\hookleftarrow \cM$
to 
$$
\{i\mu(1_x)\, i\epsilon(f)|a\stackrel f \to x; \ 
F(f)=F(1_x)\} 
\hookrightarrow \cC \hookleftarrow 
\{i\mu(f)\, i\epsilon(1_a)|a\stackrel f \to x; \ 
F(f)=F(1_a)\}.
$$
For a morphism $e:a\emor x$ in $\cE$, $i\mu(1_x)\, i\epsilon i(e)=
i\mu i(e)i\epsilon i(e)=i(e)$, and for a morphism $m:a\mmor x$ in $\cM$,
$i\mu i(m)\, i\epsilon(1_a)=i\mu i(m)i\epsilon i(m)=i(m)$. Thus a natural
transformation $\eta: 1\to RL$  
is given, at a bilinear factorization system $\cE\hookrightarrow
\cC\hookleftarrow \cM$, by the identity functor $1_\cC$. 
Evaluated at this object, $L\eta$ is the identity morphism of the
pseudoalgebra $LRL(\cE\hookrightarrow \cC\hookleftarrow
\cM)=L(\cE\hookrightarrow \cC\hookleftarrow \cM)$. Since the bilinear
factorization systems $R(\cC,F)$ and $RLR(\cC,F)$ are equal, for any strictly
associative pseudoalgebra $(\cC,F)$, also $\eta R$ is an identity natural
transformation.  
This proves that there is an adjunction $L \dashv R$, with trivial counit and
unit $\eta$. 
\end{proof}

\begin{aussage} {\bf Orthogonal factorization systems vs pseudoalgebras of 
$\ad2$.} \label{as:psalg_eq_ortfac}
Orthogonal factorization systems and pseudoalgebras of $\ad2$ were shown in
\cite{KorTho} to be equivalent notions. 
Indeed, for an orthogonal factorization system ${\overline \cE}\hookrightarrow
\cC\hookleftarrow {\overline \cM}$, a pseudoalgebra structure $F:\cC^2\to \cC$
is constructed using the diagonal fill-in property. A morphism in $\cC^2$ as
in the first diagram of \eqref{eq:K^1_mor} is taken by $F$ to the unique
diagonal arrow rendering commutative the following diagram, 
$$
\xymatrix@=30pt{
\ar[r]^-{e_g u}\ar[d]_-{e_f}&\ar[d]^-{m_g}\\ \ar@{-->}[ru]\ar[r]_-{vm_f}&}
$$
where $f=m_f e_f$ and $g=m_g f_g$ are factorizations in ${\overline \cE}$ and
${\overline \cM}$. 
Conversely, for a pseudoalgebra $(\cC,F)$ of $\ad2$, with coherence natural
isomorphism $\vartheta:FI_\cC\to 1_\cC$, there is an orthogonal factorization
system  
$$
{\overline \cE}:=\{f\in \cC\vert F(f,1_x)\ \textrm{is iso}\} 
\hookrightarrow \cC \hookleftarrow 
\{f\in \cC\vert F(1_a,f)\ \textrm{is iso}\}=:{\overline \cM},
$$
with factorization $f=\vartheta_x F(f,1_x)F(1_a,f)\vartheta^{-1}_a$ of any
morphism $f:a \to x$ in $\cC$.
\end{aussage}

\begin{corollary} \label{cor:fac>ort}
For any bilinear factorization system $\cE\stackrel i \hookrightarrow \cC
\stackrel i \hookleftarrow \cM$,  
there is a canonically associated orthogonal factorization system 
${\overline \cE}\hookrightarrow \cC\hookleftarrow {\overline \cM}$ such that
the functors $i$ factorize through ${\overline \cE}$ and ${\overline \cM}$,
respectively. 
\end{corollary}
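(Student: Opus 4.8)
The plan is to run the two constructions of Section 3 in sequence. First I would apply \ref{as:fac>psalg} to the given bilinear factorization system $\cE\stackrel i\hookrightarrow\cC\stackrel i\hookleftarrow\cM$, producing a strictly associative pseudoalgebra $(\cC,F,\vartheta)$ of $\ad2$. By \ref{as:psalg_eq_ortfac}, that is, by the equivalence of \cite{KorTho}, this pseudoalgebra corresponds to the orthogonal factorization system
$$
{\overline \cE}:=\{f\in\cC\mid F(f,1_x)\ \textrm{is iso}\}\hookrightarrow\cC\hookleftarrow\{f\in\cC\mid F(1_a,f)\ \textrm{is iso}\}=:{\overline \cM},
$$
whose diagonal fill-in and closure properties are supplied wholesale by \cite{KorTho}. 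Since $\cE$, $\cM$, ${\overline\cE}$ and ${\overline\cM}$ are all identity-on-objects subcategories of the single category $\cC$, the asserted factorization of the functors $i$ amounts to the two inclusions $i(\cE)\subseteq{\overline\cE}$ and $i(\cM)\subseteq{\overline\cM}$; this is the only thing left to check.

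For these inclusions I would exploit the adjunction $L\dashv R$ of Theorem \ref{thm:fac_eq_psalg}, where $L$ is the functor of \ref{as:fac>psalg} and $R$ that of \ref{as:psalg>fac}. The unit of that adjunction at $\cE\hookrightarrow\cC\hookleftarrow\cM$ is the identity functor $1_\cC$, and the computation in the proof of Theorem \ref{thm:fac_eq_psalg} records that $i(e)=i\mu(1_x)\,i\epsilon\,i(e)$ for every $e:a\emor x$ in $\cE$. Hence $i(e)$ lies in the $\cE$-part of $RL(\cE\hookrightarrow\cC\hookleftarrow\cM)$, which by \ref{as:psalg>fac} consists precisely of those morphisms $h$ with $F(h,1_x)=1_{FI_\cC(x)}$. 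An identity being in particular invertible, every such $h$ meets the defining condition of ${\overline\cE}$, so $i(\cE)\subseteq{\overline\cE}$; the inclusion $i(\cM)\subseteq{\overline\cM}$ follows symmetrically. The genuine content is thus the chain $\cE\subseteq RL(\cE)\subseteq{\overline\cE}$, whose first link is imported from Theorem \ref{thm:fac_eq_psalg} and whose second link is the triviality that identities are isomorphisms.

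Alternatively, and without invoking the adjunction, I would verify $i(\cE)\subseteq{\overline\cE}$ by a direct evaluation of the morphism map of $F$ from \ref{as:fac>psalg}. For $e:a\emor x$ in $\cE$ the $\cM$-linearity condition \eqref{eq:m_bilin} gives $\mu(i(e))=\mu(1_x)$, so in particular $F(i(e))=F(1_x)$; consequently $i\epsilon(1_x)\,i\mu(i(e))=i\epsilon(1_x)\,i\mu(1_x)=\vartheta_x^{-1}\vartheta_x=1_{F(1_x)}$, using the formulae $\vartheta_x=i\mu(1_x)$ and $\vartheta_x^{-1}=i\epsilon(1_x)$ established in \ref{as:fac>psalg}. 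Substituting this value into both legs of $F(i(e),1_x)=i\mu\big(i\epsilon(1_x)\,i\mu(i(e))\big)\,i\epsilon\big(i\epsilon(1_x)\,i\mu(i(e))\big)$ collapses it, via the factorization property, to $1_{F(1_x)}=1_{FI_\cC(x)}$, which is visibly an isomorphism; symmetry again handles $\cM$.

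The one point that needs care rather than difficulty is to keep the three nested subcategories straight: the original $\cE$, the reconstructed $RL(\cE)$, and the orthogonal ${\overline\cE}$. I do not anticipate a real obstacle, since both the orthogonality of ${\overline\cE}\hookrightarrow\cC\hookleftarrow{\overline\cM}$ and the passage between pseudoalgebras of $\ad2$ and orthogonal factorization systems are taken directly from \cite{KorTho}; the corollary is essentially a bookkeeping consequence of Theorem \ref{thm:fac_eq_psalg} together with \ref{as:psalg_eq_ortfac}.
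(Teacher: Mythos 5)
Your proposal is correct and takes essentially the same route as the paper: apply \ref{as:fac>psalg} (the functor $L$ of Theorem \ref{thm:fac_eq_psalg}), obtain the orthogonal factorization system from \ref{as:psalg_eq_ortfac}, and verify the inclusions $i(\cE)\subseteq{\overline \cE}$ and $i(\cM)\subseteq{\overline \cM}$. Your direct (second) verification is the paper's own check in slightly expanded form -- the paper writes ${\overline \cE}=\{f\in\cC\mid i\mu(f)\ \textrm{is iso}\}$, which is equivalent to your $F(f,1_x)$ formulation since $\vartheta_x F(f,1_x)=i\mu(f)$, and then simply notes that $i\mu i(e)=i\mu(1_b)$ (by \eqref{eq:m_bilin}) is invertible with inverse $i\epsilon(1_b)$.
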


\begin{proof}
Applying the functor $L$ in Theorem \ref{thm:fac_eq_psalg} to any bilinear
factorization system $\cE\hookrightarrow \cC\hookleftarrow \cM$, we obtain a
pseudoalgebra structure for $\ad2$ on $\cC$, with the action functor
$F:\cC^2\to \cC$ described in  \ref{as:fac>psalg}. By
\ref{as:psalg_eq_ortfac}, there is a corresponding orthogonal factorization
system 
$$
{\overline \cE}:=\{f \in \cC| i\mu(f)\ \textrm{is iso}\}
\hookrightarrow \cC\hookleftarrow 
\{f \in \cC| i\epsilon(f)\ \textrm{is iso}\}=:{\overline \cM},
$$ 
with factorization $f=i\mu(f) i\epsilon(f)$ of any
morphism $f$ in $\cC$. The category $\cE\cong i(\cE)$ is contained in
${\overline \cE}$  
since for any morphism $e:a\emor b$ in $\cE$, $i\mu i(e)=i\mu(1_b)$
(cf. \eqref{eq:m_bilin}) is an isomorphism with the inverse
$i\epsilon(1_b)$. Symmetrically, ${\overline \cM}\supseteq i(\cM)\cong \cM$.
\end{proof}

\begin{example}
The orthogonal factorization systems associated to the bilinear factorization
systems in Examples \ref{ex:order} and \ref{ex:groupoid} are trivial; that is,
in both cases ${\overline \cE}=\cC$ and the morphisms in ${\overline \cM}$ are
precisely the isomorphisms in $\cC$. (These orthogonal factorization systems
can be obtained also from strict ones in which $\cE=\cC$ and $\cM$ contains
precisely the identity morphisms in $\cC$.)

However, the orthogonal factorization system associated to the bilinear
factorization system in Example \ref{ex:non_triv} is non-trivial:
Unless $q$ (equivalently, $p$) is an isomorphism,
$p$ does not belong to ${\overline \cE}$ (whose non-identity morphisms
are $f$, $q$, $fq$ and $qp=f^{-1}$); and $q$ does not belong to ${\overline
\cM}$ (whose non-identity morphisms are $f$, $p$, $pf$ and
$qp=f^{-1}$). Thus in general none of the subcategories ${\overline \cE}$ and
${\overline \cM}$ is equal to $\cC$. 
\end{example}

\begin{corollary}
For any weak distributive law $\varrho:\cE\cM\to \cM\cE$ in $\SM$, there is an 
orthogonal factorization system ${\overline \cE}\hookrightarrow \cM_\varrho\cE
\hookleftarrow {\overline \cM}$ on the associated product category together
with identity-on-objects functors $\cE\to \overline{\cE}$ and $\cM\to
\overline{\cM}$.   
\end{corollary}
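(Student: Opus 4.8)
The plan is to obtain the asserted orthogonal factorization system by feeding the bilinear factorization system associated with $\varrho$ into Corollary \ref{cor:fac>ort}. Thus the proof will be essentially an assembly of earlier constructions, and the only real content is to track the identity-on-objects property of the comparison functors.

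First I would produce a bilinear factorization system on $\cM_\varrho\cE$. If $\varrho$ is injective, the construction in \ref{as:dl>fac} already exhibits $\cE\stackrel i\hookrightarrow \cM_\varrho\cE\stackrel i\hookleftarrow \cM$ as a bilinear factorization system. For a general $\varrho$ I would replace it by the injective weak distributive law $S(\varrho)$ of Proposition \ref{prop:retract}; the closing remark of Section \ref{sec:wdl} shows that the induced product category $i(\cM)_{S(\varrho)}i(\cE)$ coincides with $\cM_\varrho\cE$, so that $i(\cE)\hookrightarrow \cM_\varrho\cE\hookleftarrow i(\cM)$ is a bilinear factorization system on exactly the category named in the statement.

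Next I would apply Corollary \ref{cor:fac>ort} to this bilinear factorization system. That yields an orthogonal factorization system $\overline{\cE}\hookrightarrow \cM_\varrho\cE\hookleftarrow \overline{\cM}$ together with the assertion that the inclusions of the two subcategories factorize through $\overline{\cE}$ and $\overline{\cM}$, respectively. Composing the identity-on-objects functors $i:\cE\to \cM_\varrho\cE$ and $i:\cM\to \cM_\varrho\cE$ of Proposition \ref{prop:functor_i} with the factorizations supplied by Corollary \ref{cor:fac>ort} produces the desired functors $\cE\to \overline{\cE}$ and $\cM\to \overline{\cM}$.

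The last point to verify --- and the only place where anything must be checked --- is that these comparison functors are identity-on-objects. This holds because each factor is: the functors $i$ of Proposition \ref{prop:functor_i} are identity-on-objects by construction, and the inclusions $\overline{\cE}\hookrightarrow \cM_\varrho\cE\hookleftarrow \overline{\cM}$ are identity-on-objects since $\overline{\cE}$ and $\overline{\cM}$ share the object set of $\cM_\varrho\cE$. Hence the induced functors onto $\overline{\cE}$ and $\overline{\cM}$ agree with $i$ on objects and are therefore identity-on-objects, as claimed. I expect no genuine obstacle here; the sole subtlety is the passage through $S(\varrho)$ in the non-injective case, which is handled by the coincidence of product categories already recorded at the end of Section \ref{sec:wdl}.
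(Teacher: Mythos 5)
Your proposal is correct and follows essentially the same route as the paper: the published proof likewise passes to the injective law $S(\varrho)$ of Proposition \ref{prop:retract}, invokes the coincidence $i(\cM)_{S(\varrho)}i(\cE)=\cM_\varrho\cE$ to obtain the bilinear factorization system $i(\cE)\hookrightarrow \cM_\varrho\cE\hookleftarrow i(\cM)$ (this is exactly the left adjoint of Corollary \ref{cor:wdl_fac}), and then concludes by Corollary \ref{cor:fac>ort}. Your closing paragraph merely makes explicit the identity-on-objects bookkeeping that the paper leaves implicit.
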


\begin{proof}
Applying the left adjoint functor in Corollary \ref{cor:wdl_fac} to any
weak distributive law $\varrho:\cE\cM\to \cM\cE$, we obtain a bilinear
factorization system $i({\cE})\hookrightarrow i(\cM)_{S(\varrho)} i(\cE)=
\cM_\varrho\cE \hookleftarrow i({\cM})$, where $S$ is the left adjoint functor
from Proposition \ref{prop:retract} and $i$ stands for the
identity-on-objects functors in Proposition \ref{prop:functor_i}. Hence the
claim follows by Corollary \ref{cor:fac>ort}. 
\end{proof}


\begin{thebibliography}{Bibliography}{}

\bibitem{Beck} 
J. Beck, 
{\em Distributive laws}.
[in:] {\em Seminar on Triples  and Categorical Homology Theory}, 
B. Eckmann (ed.), Springer LNM 80, 119-140 (1969).    

\bibitem{Boh_wtm} 
G. B\"ohm, {\em The weak theory of monads}.
Adv. in Math. 225 (2010), 1-32. 

\bibitem{DayStr}
B.J. Day and R.H. Street,
{\em Quantum categories, star autonomy, and quantum groupoids}. 
[in:] {\em Galois Theory, Hopf Algebras, and Semiabelian Categories}, 
G. Janelidze, B. Pareigis and W. Tholen (eds.),
Fields Institute Communications 43, 187-226 (2004). 

\bibitem{FreKel} 
P. Freyd and G.M. Kelly, 
{\em Categories of Continuous Functors I}. 
J. Pure Appl. Algebra 2 (1972), 169-191.

\bibitem{Gra}
M. Grandis,
{\em Weak subobjects and the epi-monic completion of a category}. 
J. Pure Appl. Algebra 413 (2000), 363-413.

\bibitem{Kelly_lax_alg}
G.M. Kelly,
{\em Coherence theorems for lax algebras and for distributive laws}.
[in:] {\em Category Sem. Proc. Sydney 1972/1973}, 
G.M. Kelly (ed.), Springer LNM 420, 281-375 (1974). 

\bibitem{KeSt} 
G.M. Kelly and R. Street,
{\em Review of the elements of 2-categories}.
[in:] {\em Category Sem. Proc. Sydney 1972/1973}, 
G.M. Kelly (ed.), Springer LNM 420, 75-103 (1974).  

\bibitem{KorTho} 
M. Korostenski and W. Tholen,
{\em Factorization systems as Eilenberg-Moore algebras}.
J. Pure Appl. Algebra 85 (1993), 57-72.

\bibitem{Lac_talk} 
S. Lack,
{\em Welcome to the weak world}.
Talk given at the conference CT 2009 in Cape Town.
 
\bibitem{RosWoo}
R.D. Rosebrugh and R.J. Wood,
{\em Distributive laws and factorization}.
J. Pure Appl. Algebra 175 (2002), 327-353.
 
\bibitem{street}
R.H. Street,
{\em The formal theory of monads}.
J. Pure Appl. Algebra 2 (1972), 149-168.

\bibitem{Str:weak_dl} 
R.H. Street, 
{\em Weak distributive laws}.
Theory and Applications of Categories 22 (2009), 313-320.

\end{thebibliography}
\end{document}